\definecolor{Green}{rgb}{0,0.4,0}
\DeclareMathAlphabet{\mathpzc}{OT1}{pzc}{m}{it}
\newcommand{\ud}{\mathrm{d}}
\newcommand{\vol}{\mathrm{vol}}
\newcommand{\Li}{\mathrm{Li}}
\theoremstyle{definition}
\newtheorem{dfnt}{Definition}[section]
\theoremstyle{plain}
\newtheorem{lemma}{Lemma}[section]
\newtheorem{thrm}{Theorem}
\theoremstyle{remark}
\newtheorem{rmk}{Remark}
\newtheorem*{rmk*}{Remark}
\title{The asymptotic volume of diagonal subpolytopes of symmetric stochastic matrices}
\author{
\begin{tabular}[t]{rl} 
J. de Jong\hspace{8mm}  & \hspace{8mm}R. Wulkenhaar \\
\multicolumn{2}{c}{\small WWU M\"unster, Germany}\\
\multicolumn{2}{c}{\small Mathematical Institute}\\
\footnotesize \texttt{j.dejong@uni-muenster.de} & \footnotesize \texttt{raimar@math.uni-muenster.de}
\end{tabular}
}\normalsize
\date{\today}
\begin{document}
\maketitle
\begin{abstract}
The asymptotic volume of the polytope of symmetric stochastic matrices can be determined by asymptotic enumeration techniques as in the case of the Birkhoff polytope. These methods can be extended to polytopes of symmetric stochastic matrices with given diagonal, if this diagonal varies not too wildly. To this end, the asymptotic number of symmetric matrices with natural entries, zero diagonal and varying row sums is determined.
\end{abstract}

\vspace{5mm}
\textrm{\small Keywords: Asymptotic enumeration, Polytope volumes}\\
\textrm{\small MSC 2010: 05A16, 52B11}\normalsize

\section{Introduction}
Convex polytopes arise naturally in various places in mathematics. A fundamental problem is the polytope's volume. Some results are known for low-dimensional setups~\cite{gruber}, polytopes with only a few vertices, or highly symmetric cases~\cite{mckay3, canfield1}. This work belongs to the latter category.\\
\begin{dfnt}
A convex polytope $\mathpzc{P}$ is the convex hull of a finite set $S_{\mathpzc{P}}=\{v_{j}\in\mathbb{R}^{n}\}$ of vertices.
\end{dfnt}
Stochastic matrices are square matrices with nonnegative entries, such that every row of the matrix sums to one. The symmetric stochastic $N\times N$-matrices are an example of a convex polytope. It will be denoted by $\mathcal{P}_{N}$. Its vertices are given by the symmetric permutation matrices. There are $\sum_{j=0}^{N/2}\binom{N}{2j}(2j-1)!!$ such matrices. It follows directly from the Birkhof-Von Neumann theorem that all symmetric stochastic matrices are of this form. A basis for this space is given by
\begin{equation*}
\{I_{N}\}\cup\{B^{(jk)}|1\leq j<k\leq N\}\quad,
\end{equation*}
where $I_{N}$ is the $N\times N$ identity matrix and the matrix elements of $B^{(jk)}$ are given by
\begin{equation*}
B^{(jk)}_{lm}=\left\{\begin{array}{ll}B^{(jk)}_{lm}=1&\text{, if }\{l,m\}=\{j,k\}\quad;\\B^{(jk)}_{lm}=1&\text{, if }j\neq l=m\neq k\quad;\\B^{(jk)}_{lm}=0&\text{, otherwise }\end{array}\right.\quad.
\end{equation*}
All these vertices are linearly independent and it follows that the polytope is\\
\mbox{$\binom{N}{2}$-dimensional}. 
\begin{dfnt}
A convex subpolytope $\mathpzc{P}'$ of a convex polytope $\mathpzc{P}$ is the convex hull of a finite set $\{v'_{j}\in \mathpzc{P}\}$ of elements in $\mathpzc{P}$.
\end{dfnt}
Slicing a polytope yields a surface of section, which is itself a convex space and, hence, a polytope. Determining its vertices is in general very difficult.\\
Spaces of symmetric stochastic matrices with several diagonal entries fixed are examples of such slice subpolytopes of $\mathcal{P}_{N}$, provided that these entries lie between zero and one. The slice subpolytope of $\mathcal{P}_{N}$, obtained by fixing all diagonal entries $h_{j}\in[0,1]$, will be called the diagonal subpolytope $P_{N}(h_{1},\ldots,h_{N})$ here. This is a polytope of dimension $N(N-3)/2$. These polytopes form the main subject of this paper.\\

To keep the notation light, vectors of $N$ elements are usually written by a bold symbol. The diagonal subpolytope with entries $h_{1},\ldots,h_{N}$ will thus be written by $P_{N}(\bm{h})$.\\

The main results are the following two theorems.
\begin{thrm}\label{thrm:p1}
Let $V_{N}(\bm{t};\lambda)$ be the number of symmetric $N\times N$-matrices with an empty diagonal and entries in the natural numbers such that $t_{j}$ is the $j$-th row sum. Denote the total entry sum by $x=\sum_{j=1}^{N}t_{j}$ and let $\lambda$ be the average matrix entry 
\begin{equation*}
\lambda=\frac{x}{N(N-1)}>\frac{C}{\log N}\quad.
\end{equation*}
If for some $\omega\in(0,\frac{1}{4})$ the limit
\begin{equation*}
\lim_{N\rightarrow\infty}\frac{t_{j}-\lambda(N-1)}{\lambda N^{\frac{1}{2}+\omega}}=0\qquad\text{for all }j=1,\ldots,N\quad,
\end{equation*}
then the number of such matrices is asymptotically ($N\rightarrow\infty$) given by
\begin{align*}
&\hspace{-4mm}V_{N}(\bm{t};\lambda)=\frac{\sqrt{2}(1+\lambda)^{\binom{N}{2}}}{(2\pi\lambda(\lambda+1)N)^{\frac{N}{2}}}\big(1+\frac{1}{\lambda}\big)^{\frac{x}{2}}\exp[\frac{14\lambda^{2}+14\lambda-1}{12\lambda(\lambda+1)}]\\
&\times\exp[\frac{-1}{2\lambda(\lambda+1)N}\sum_{m}(t_{m}-\lambda(N-1))^{2}]\exp[\frac{-1}{\lambda(\lambda+1)N^{2}}\sum_{m}(t_{m}-\lambda(N-1))^{2}]\\
&\times\exp[\frac{2\lambda+1}{6\lambda^{2}(\lambda+1)^{2}N^{2}}\sum_{m}(t_{m}-\lambda(N-1))^{3}]\exp[-\frac{3\lambda^{2}+3\lambda+1}{12\lambda^{3}(\lambda+1)^{3}N^{3}}\sum_{m}(t_{m}-\lambda(N-1))^{4}]\\
&\times\exp[\frac{1}{4\lambda^{2}(\lambda+1)^{2}N^{4}}\big(\sum_{m}(t_{m}-\lambda(N-1))^{2}\big)^{2}]\times\Big(1+\mathcal{O}(N^{-\frac{1}{2}+6\omega})\Big)\quad.
\end{align*}
\end{thrm}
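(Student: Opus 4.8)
The plan is to realize $V_N(\bm{t};\lambda)$ through a generating function and then extract the relevant coefficient by a high-dimensional saddle-point analysis. Assigning to each off-diagonal pair $\{j,k\}$ the nonnegative integer entry $a_{jk}$ and summing the geometric series $\sum_{a\ge0}(z_jz_k)^{a}=(1-z_jz_k)^{-1}$, one sees that $V_N(\bm{t};\lambda)$ is the coefficient of $\prod_{j=1}^{N}z_j^{t_j}$ in $\prod_{1\le j<k\le N}(1-z_jz_k)^{-1}$, the exponent of $z_j$ recording the $j$-th row sum. Cauchy's formula then gives
\begin{equation*}
V_N(\bm{t};\lambda)=\frac{1}{(2\pi i)^N}\oint\cdots\oint\frac{1}{\prod_{j}z_j^{t_j+1}}\prod_{1\le j<k\le N}\frac{1}{1-z_jz_k}\,\ud z_1\cdots\ud z_N\quad.
\end{equation*}
Note that $x=\sum_j t_j=2\sum_{j<k}a_{jk}$ is necessarily even, a parity fact that turns out to be decisive.

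First I would take all contours to be the common circle $z_j=re^{i\theta_j}$ with radius fixed by the symmetric saddle-point equation $r^2/(1-r^2)=\lambda$, i.e. $r^2=\lambda/(1+\lambda)$; this is the value making each entry a mean-$\lambda$ geometric variable, and it already produces the prefactors $r^{-x}=(1+1/\lambda)^{x/2}$ and $(1-r^2)^{-\binom{N}{2}}=(1+\lambda)^{\binom{N}{2}}$. Setting $\phi=\theta_j+\theta_k$ and expanding $g(\phi):=-\log\tfrac{1-r^2e^{i\phi}}{1-r^2}$, the linear coefficient $g'(0)=i\lambda$ summed over pairs gives $i\lambda(N-1)\sum_j\theta_j$, which combines with the phase $-i\sum_j t_j\theta_j$ to leave $-i\sum_j b_j\theta_j$ with $b_j:=t_j-\lambda(N-1)$; the quadratic coefficient $g''(0)=-\lambda(1+\lambda)$ produces the negative-definite form $-\tfrac12\bm{\theta}^{\!\top}A\bm{\theta}$ with $A=\lambda(1+\lambda)\big[(N-2)I_N+J\big]$, where $J$ is the all-ones matrix.

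The core is then a shifted Gaussian integral, and two features conspire to fix the overall constant. First, the modulus of the integrand is maximal exactly where every $\theta_j+\theta_k\in2\pi\mathbb{Z}$, which forces all $\theta_j$ equal and in $\{0,\pi\}$; by the parity of $x$ the integrand is invariant under $\bm{\theta}\mapsto\bm{\theta}+\pi\mathbf{1}$, so the points $\bm{\theta}=\mathbf{0}$ and $\bm{\theta}=\pi\mathbf{1}$ contribute equally and the main term is doubled. Second, completing the square yields $(\det A)^{-1/2}$ with $\det A=[\lambda(1+\lambda)]^{N}\cdot2(N-1)(N-2)^{N-1}$; the factor $2(N-1)$ produces $\sqrt{2N}$ in the denominator, whose $\sqrt2$ combines with the doubling into the net $\sqrt2$ of the statement, while $(N-2)^{N-1}\sim N^{N-1}e^{-2}$ yields the power $N^{-N/2}$ and a pure constant $e$ that combines with the bulk self-contraction constant $\tfrac{2\lambda^2+2\lambda-1}{12\lambda(\lambda+1)}$ to reconstruct $\exp[\tfrac{14\lambda^2+14\lambda-1}{12\lambda(\lambda+1)}]$. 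The shift contributes $-\tfrac12\bm{b}^{\!\top}A^{-1}\bm{b}$, and the Sherman--Morrison inverse $A^{-1}=\tfrac{1}{\lambda(1+\lambda)}\big[\tfrac{1}{N-2}I_N-\tfrac{1}{(N-2)(2N-2)}J\big]$ gives, after expanding $1/(N-2)$ in $1/N$, the leading correction $\exp[-\tfrac{1}{2\lambda(\lambda+1)N}\sum_m b_m^2]$ and the next one $\exp[-\tfrac{1}{\lambda(\lambda+1)N^2}\sum_m b_m^2]$. Carrying the cubic and quartic terms $g'''(0)$ and $g''''(0)$ of the $\phi$-expansion into the shifted Gaussian average and applying Wick's theorem then produces the remaining factors in $\sum_m b_m^3$, $\sum_m b_m^4$ and $(\sum_m b_m^2)^2$, each contraction against $A^{-1}$ contributing a power of $1/N$ weighted by the appropriate deviation moment.

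The main obstacle is the rigorous justification of this formal Laplace expansion, which splits into a localization estimate and a uniform remainder bound. I would first show that the contribution of $\bm{\theta}$ lying outside shrinking balls around the two maxima is exponentially negligible: since $\mathrm{Re}\,g(\phi)=-\tfrac12\log\tfrac{1-2r^2\cos\phi+r^4}{(1-r^2)^2}<0$ away from $\phi\in2\pi\mathbb{Z}$, the integrand modulus decays, but the per-pair suppression degenerates as $\lambda\to0$, so only its accumulation over $\binom{N}{2}$ pairs saves the bound; this is exactly where the hypothesis $\lambda>C/\log N$ enters, keeping $r$ away from the branch point $r^2=1$ so that $N^2$ weak suppressions dominate all polynomial factors. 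Inside the balls I would estimate the truncated Taylor remainder uniformly, using the deviation hypothesis $b_j=o(\lambda N^{1/2+\omega})$ both to keep the shifted saddle within the region of validity and to control the moments $\sum_m b_m^k$. The most laborious and error-prone step is tracking how powers of $N^{\omega}$ accumulate through the cubic and quartic corrections and the Sherman--Morrison expansions: the first neglected contribution involves a sixth-order deviation moment (equivalently a triple product of second moments), which under the hypothesis is of size $\mathcal{O}(N^{-1/2+6\omega})$, fixing the stated error term, and one must verify that nothing larger survives elsewhere in the expansion.
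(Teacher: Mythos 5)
Your proposal is correct in outline and shares the paper's global strategy --- generating function, Cauchy's formula, localization of the torus integral to the two symmetric maxima (your doubling argument via $\bm{\theta}\mapsto\bm{\theta}+\pi\mathbf{1}$ and the parity of $x$ is exactly the paper's factor $2$), then a fourth-order saddle-point expansion --- but your central computation is organized along a genuinely different route. The paper tilts the contour radii individually, $\lambda_{j}=\lambda+\varepsilon_{j}$ with $\varepsilon_{j}=\frac{2}{N-2}(t_{j}-\lambda(N-1))$, so that the linear term is absorbed into the contour choice, and then factorizes the box integral by Fourier-represented delta functions for the collective variables $S_{1},S_{2},T_{3},T_{4}$, reducing everything to one-dimensional saddle points (Lemmas \ref{l:diff} and \ref{l:labval}); you instead keep the common radius $r^{2}=\lambda/(1+\lambda)$, retain the linear phase $-i\sum_{j}b_{j}\theta_{j}$, and handle it exactly by completing the square against $A=\lambda(1+\lambda)[(N-2)I_{N}+J]$. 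Your checkable ingredients are right: $\det A=[\lambda(1+\lambda)]^{N}\,2(N-1)(N-2)^{N-1}$, the Sherman--Morrison inverse, the vanishing of the $J$-contribution because $\sum_{j}b_{j}=0$, the expansion of $1/(N-2)$ reproducing the first two correction factors, and the identity $1+\frac{2\lambda^{2}+2\lambda-1}{12\lambda(\lambda+1)}=\frac{14\lambda^{2}+14\lambda-1}{12\lambda(\lambda+1)}$ that recombines the determinant's constant $e$ with the Wick constant. What your route buys is transparency: the $\sum_{m}b_{m}^{2}$ factors fall out of $-\frac{1}{2}\bm{b}^{\top}A^{-1}\bm{b}$ in one line, with no cross-term bookkeeping; what the paper's route buys is that after the tilt every remaining integral is scalar, which is what makes the cubic/quartic corrections and, importantly, the quantitative error control (the difference $\mathcal{D}$ of Lemma \ref{l:diff} measured against the Lower bound of Definition \ref{dfnt:lb}, with the truncation order $K$ satisfying $K\alpha>2$) mechanical. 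Two caveats on your sketch. First, in the localization step the suppression does not accumulate over all $\binom{N}{2}$ pairs for every bad configuration: with $m$ outlying angles only roughly $mN$ pairs are damped, and this is precisely why the paper's Lemma \ref{l:funris} must split into the cases of finitely many versus $\rho N$ outliers; it is there, and in bounding prefactors of the type $\exp[c/\lambda]$ by powers $N^{c/C}$, that the hypothesis $\lambda>C/\log N$ is actually consumed. Second, your final error claim --- that the first neglected contribution (sixth-order moments, equivalently $(\sum_{m}b_{m}^{2})^{3}$) is $\mathcal{O}(N^{-\frac{1}{2}+6\omega})$ --- is asserted rather than derived; it matches the paper's stated remainder, but verifying that nothing larger survives is exactly the laborious step you flag, and it is the one place where your plan, like the paper's proof, leaves the heaviest work implicit.
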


\begin{thrm}\label{thrm:p2}
Let $\bm{h}=h_{1},\ldots,h_{N}$ with $h_{j}\in[0,1]$ and $\chi=\sum_{j=1}^{N}h_{j}$. If
\begin{equation*}
\lim_{N\rightarrow\infty}N^{\frac{1}{2}-\omega}\frac{N-1}{N-\chi}\cdot \big|h_{j}-\frac{\chi}{N}\big|=0\quad\text{for all }j=1,\ldots,N\qquad,
\end{equation*}
and for some $\omega\in(\frac{\log\log N}{2\log N},\frac{1}{4})$, then the asymptotic volume ($N\rightarrow\infty$) of the polytope of symmetric stochastic $N\times N$-matrices with diagonal $(h_{1},\ldots,h_{N})$ is given by
\begin{align*}
&\hspace{-4mm}\vol(P_{N}(\bm{h}))=\sqrt{2}e^{\frac{7}{6}}\Big(\frac{e(N-\chi)}{N(N-1)}\Big)^{\binom{N}{2}}\Big(\frac{N(N-1)^{2}}{2\pi(N-\chi)^{2}}\Big)^{\frac{N}{2}}\exp[-\frac{N(N-1)^{2}}{2(N-\chi)^{2}}\sum_{j}(h_{j}-\frac{\chi}{N})^{2}]\nonumber\\
&\times\exp[-\frac{(N-1)^{2}}{(N-\chi)^{2}}\sum_{j}(h_{j}-\frac{\chi}{N})^{2}]\exp[-\frac{N(N-1)^{3}}{3(N-\chi)^{3}}\sum_{j}(h_{j}-\frac{\chi}{N})^{3}]\nonumber\\
&\times\exp[-\frac{N(N-1)^{4}}{4(N-\chi)^{4}}\sum_{j}(h_{j}-\frac{\chi}{N})^{4}]\exp[\frac{(N-1)^{4}}{4(N-\chi)^{4}}\big(\sum_{j}(h_{j}-\frac{\chi}{N})^{2}\big)^{2}]\\
&\times\big(1+\mathcal{O}(N^{-\frac{1}{2}+6\omega})\big)\quad.
\end{align*}
\end{thrm}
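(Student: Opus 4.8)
The plan is to recover the (lattice-normalised, i.e.\ Ehrhart-relative) volume of $P_N(\bm h)$ from the lattice-point counts of Theorem~\ref{thrm:p1} by a dilation argument. First I would identify $P_N(\bm h)$ with the set of symmetric nonnegative matrices $A=(A_{jk})$ with vanishing diagonal and prescribed row sums $\sum_{k\neq j}A_{jk}=r_j:=1-h_j$: once the diagonal is frozen to $\bm h$, the $j$-th stochasticity constraint reads exactly $\sum_{k\neq j}A_{jk}=1-h_j$. This realises the polytope as the slice of $\mathbb R^{\binom N2}_{\ge0}$ (coordinates $A_{jk}$, $j<k$) by the $N$ row-sum equations $MA=\bm r$, where $M$ is the edge--vertex incidence matrix of $K_N$. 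These $N$ equations are independent, so $\dim P_N(\bm h)=\binom N2-N=N(N-3)/2=:d$, and the total continuous row sum is $\sum_j r_j=N-\chi$, the exact analogue of the total entry sum $x$ in Theorem~\ref{thrm:p1}.

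The bridge to enumeration is the Riemann-sum identity $\operatorname{vol}(P_N(\bm h))=\lim_{s\to\infty}s^{-d}\,V_N(\bm t_s;\lambda_s)$, where $\bm t_s$ is any integer vector with $t_{s,j}=sr_j+O(1)$ and $\sum_j t_{s,j}$ even, and $\lambda_s:=\dfrac{s(N-\chi)}{N(N-1)}$ is the corresponding average entry; the $O(1)$ rounding is harmless in the limit and sidesteps any rationality assumption on $\bm h$. With this substitution $t_{s,j}-\lambda_s(N-1)=-s\bigl(h_j-\tfrac{\chi}{N}\bigr)+O(1)$, so dividing by $\lambda_s N^{1/2+\omega}$ turns the deviation hypothesis of Theorem~\ref{thrm:p1} into $N^{1/2-\omega}\tfrac{N-1}{N-\chi}\bigl|h_j-\tfrac{\chi}{N}\bigr|\to0$, which is precisely the hypothesis of Theorem~\ref{thrm:p2} (the factor $s$ cancels), while $\lambda_s\to\infty$ trivially clears the threshold $\lambda_s>C/\log N$.

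It then remains to substitute $\lambda_s$ and $t_{s,j}-\lambda_s(N-1)=-s(h_j-\chi/N)$ into the formula of Theorem~\ref{thrm:p1} and send $s\to\infty$. Writing $\kappa:=\tfrac{N-\chi}{N(N-1)}$ so that $\lambda_s=s\kappa$, the leading factors satisfy $(1+\lambda_s)^{\binom N2}(1+\lambda_s^{-1})^{x/2}=(e\kappa s)^{\binom N2}(1+o(1))$ because $(1+\lambda^{-1})^{x/2}\to e^{N(N-1)/2}$, the Gaussian prefactor satisfies $(2\pi\lambda_s(\lambda_s+1)N)^{-N/2}=s^{-N}(2\pi\kappa^2N)^{-N/2}(1+o(1))$, and the constant exponential tends to $e^{7/6}$. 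The surviving power $s^{\binom N2-N}=s^{d}$ cancels the $s^{-d}$, and in each correction exponential the replacement $\sum_m(t_{s,m}-\lambda_s(N-1))^k=(-s)^k\sum_m(h_m-\tfrac{\chi}{N})^k$ cancels the accompanying power of $\lambda_s$, so every cumulant term converges to the matching exponential of Theorem~\ref{thrm:p2}. Collecting the surviving constants reproduces $\sqrt2\,e^{7/6}\bigl(\tfrac{e(N-\chi)}{N(N-1)}\bigr)^{\binom N2}\bigl(\tfrac{N(N-1)^2}{2\pi(N-\chi)^2}\bigr)^{N/2}$ and hence the full statement.

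The main obstacle is the interchange of the two limits: Theorem~\ref{thrm:p1} is an $N\to\infty$ asymptotic with relative error $\mathcal O(N^{-1/2+6\omega})$, whereas the dilation identity needs $s\to\infty$. To make the argument rigorous I would bypass the fixed-$N$ Ehrhart statement and instead compute $\operatorname{vol}(P_N(\bm h))$ directly from its inverse-Laplace representation $\tfrac{1}{(2\pi i)^N}\int_{\mathcal C}\tfrac{\exp(\sum_j\beta_j r_j)}{\prod_{j<k}(\beta_j+\beta_k)}\,d\bm\beta$ over a product of vertical contours. This is exactly the $\lambda\to\infty$ degeneration of the generating-function integral $V_N(\bm t)=\tfrac{1}{(2\pi i)^N}\oint \prod_{j<k}(1-z_jz_k)^{-1}\prod_j z_j^{-t_j-1}\,d\bm z$ behind Theorem~\ref{thrm:p1}, since under $z_j=e^{-\beta_j}$ the saddle sits at $\beta_j\to0$ and $(1-e^{-(\beta_j+\beta_k)})^{-1}\to(\beta_j+\beta_k)^{-1}$. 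The same saddle-point analysis---locating the saddle, diagonalising the Hessian, and carrying the cumulant expansion to fourth order with error $\mathcal O(N^{-1/2+6\omega})$---therefore transports to the continuous integral, the dilation computation above serving as the bookkeeping check that fixes every constant. Controlling this expansion uniformly, which is where the sharpened range $\omega\in(\tfrac{\log\log N}{2\log N},\tfrac14)$ enters, is the genuinely technical step.
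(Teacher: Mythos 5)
Your core argument is the paper's own proof: the paper also realises $P_{N}(\bm{h})$ as the slice with row sums $1-h_{j}$, writes $\vol(P_{N}(\bm{h}))=\lim_{a\rightarrow0}a^{N(N-3)/2}\,V_{N}\big(\frac{\bm{1}-\bm{h}}{a};\frac{N-\chi}{aN(N-1)}\big)$ (your $s=a^{-1}$), observes that the deviation hypothesis of Theorem \ref{thrm:p1} is dilation-invariant and becomes exactly the hypothesis of Theorem \ref{thrm:p2}, and then takes the term-by-term $\lambda\rightarrow\infty$ limit of the counting formula; your bookkeeping of the powers of $s$ and of each cumulant factor reproduces (\ref{e:pv1}) correctly. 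Where you diverge is in how the $s\rightarrow\infty$ versus $N\rightarrow\infty$ interchange is closed. The paper never re-derives anything in a continuous setting: it takes the relative error to be \emph{inherited} from Theorem \ref{thrm:p1}, which is legitimate because the error bounds of Lemmas \ref{l:labval} and \ref{l:funris} are uniform in $\lambda$ once $\lambda>C/\log N$, so the $a\rightarrow0$ limit is taken inside a fixed relative-error envelope. Its technical effort goes instead into the coverage integral (\ref{e:matcov}), which integrates $V_{N}(\bm{t};\lambda)$ over all admissible $\bm{t}$: that computation is the actual source of the lower bound $\omega\geq\frac{\log\log N}{2\log N}$ in the statement (it makes the factor $\big(1-\mathcal{O}(e^{-N^{2\omega}}/N^{2\omega})\big)^{N}$ tend to one) and, combined with the upper bound on $\omega$ from Lemma \ref{l:labval}, forces the expansion order $K\gtrsim\log(N)/\alpha$ --- a chain of constraints your proposal does not engage with, since you use Theorem \ref{thrm:p1} as a black box. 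Your alternative completion --- recasting the volume as the inverse-Laplace integral with kernel $\prod_{j<k}(\beta_{j}+\beta_{k})^{-1}$ and transporting the saddle-point analysis to it --- is coherent and would indeed eliminate the double limit altogether, but it is not what the paper does, and it is the expensive option: one would have to re-prove the analogues of Lemmas \ref{l:diff}, \ref{l:labval} and \ref{l:funris} for the continuous integrand rather than cite them. The cheap repair, and the one the paper implicitly uses, is the uniformity in $\lambda$ of the existing discrete estimates.
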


The outline of this paper is as follows. In Paragraph \ref{sec:cp} the volume problem is formulated as a counting problem and subsequently as a contour integral. Under the assumption of a restricted region this is subsequently integrated in Paragraph \ref{sec:icp}. Paragraph \ref{sec:rir} is dedicated to a fundamental lemma to actually restrict the integration region. The volume of the diagonal subpolytopes is extracted from the counting result in Paragraph \ref{sec:pv}.

\section{Counting problem\label{sec:cp}}
The volume of a polytope $\mathpzc{P}$ in $\mathbb{R}^{n}$ with basis $\{\mathcal{B}_{j}\in\mathbb{R}^{n}|1\leq j\leq d\}$ is obtained by
\begin{equation*}
\int_{[0,1]^{d}}\ud\bm{u}\,\bm{1}_{\mathpzc{P}}(\sum_{j=1}^{d}u_{j}\mathcal{B}_{j})\quad,
\end{equation*}
where $\bm{1}_{\mathpzc{P}}$ is the indicator function for the polytope $\mathpzc{P}$. If the polytope is put on a lattice $(a\mathbb{Z})^{n}$ with lattice parameter $a\in(0,1)$, an approximation of this volume is obtained by counting the lattice sites inside the polytope and multiplying this by the volume $a^{n}$ of a single cell. This approximation becomes better as the lattice parameter shrinks. In the limit this yields
\begin{equation}
\vol(\mathpzc{P})=\lim_{a\rightarrow 0} a^{n}\,\,|\{\mathpzc{P}\cap (a\mathbb{Z})^{n}\}|\quad.\label{e:ehrhart}
\end{equation}
This approach is formalized by the Ehrhart polynomial~\cite{stanley1}, which counts the number of lattice sites of $\mathbb{Z}^{n}$ in a dilated polytope. A dilation of a polytope $\mathpzc{P}$ by a factor $a^{-1}>1$ yields the polytope $a^{-1}\mathpzc{P}$, which is the convex hull of the dilated vertices $S_{a^{-1}\mathpzc{P}}=\{a^{-1}v|v\in S_{\mathpzc{P}}\}$. That the obtained volume is the same, follows from the observation
\begin{equation*}
|\{a^{-1}\mathpzc{P}\cap \mathbb{Z}^{n}\}|=|\{\mathpzc{P}\cap (a\mathbb{Z})^{n}\}|\quad.
\end{equation*}

The volume integral of the diagonal subpolytope $P_{N}(\bm{h})$ is
\begin{equation*}
\vol(P_{N}(\bm{h}))=\Big\{\!\!\prod_{1\leq k<l\leq N}\int_{0}^{1}\ud u_{kl}\Big\}\quad\bm{1}_{P_{N}(\bm{h})}\big(I_{N}+\!\!\!\!\sum_{1\leq k<l\leq N}\!\!u_{kl}(B^{(kl)}-I_{N})\big)\quad.
\end{equation*}
To see that this integral covers the polytope, it suffices to see that the any symmetric stochastic matrix $A=(a_{kl})$ is decomposed in basis vectors as
\begin{equation*}
A=\big(a_{kl}\big)=I_{N}+\sum_{1\leq k<l\leq N}a_{kl}(B^{(kl)}-I_{N})\quad.
\end{equation*}

The next step is to introduce a lattice $(a\mathbb{Z})^{\binom{N}{2}}$ and count the sites inside the polytope. Each such site is a symmetric stochastic matrix with $h_{1},\ldots,h_{N}$ on the diagonal.\\
Since the volume depends continuously on the extremal points, it can be assumed without loss of generality that all $h_{j}$ are rational. This implies that a dilation factor $a^{-1}$ exists, such that all $a^{-1}(1-h_{j})=t_{j}\in\mathbb{N}$ and that the matrices that solve
\begin{equation}
\left(\begin{array}{cccc}0&b_{12}&\cdots&b_{1N}\\b_{12}&0&\cdots&b_{2N}\\\vdots&\vdots&\ddots&\vdots\\b_{1N}&b_{2N}&\cdots&0\end{array}\right)\left(\begin{array}{c}1\\1\\\vdots\\1\end{array}\right)=\left(\begin{array}{c}t_{1}\\t_{2}\\\vdots\\t_{N}\end{array}\right)\label{e:natmateq}
\end{equation}
with $t_{j},b_{jk}\in\mathbb{N}$ are to be counted. This yields a number $V_{N}(\bm{t})$. The polytope volume is then given by
\begin{equation*}
\vol(P_{N}(\bm{h}))=\lim_{a\rightarrow0}\,a^{\frac{N(N-3)}{2}}V_{N}(\frac{1-h_{1}}{a},\ldots,\frac{1-h_{N}}{a})\quad,
\end{equation*}
where 
\begin{align}
&\hspace{-4mm}V_{N}(\bm{t})=\oint_{\mathcal{C}}\frac{\ud w_{1}}{2\pi i w_{1}^{1+t_{1}}}\ldots \oint_{\mathcal{C}}\frac{\ud w_{N}}{2\pi i w_{N}^{1+t_{N}}}\,\prod_{1\leq k<l\leq N}\frac{1}{1-w_{k}w_{l}}\label{e:V2}\quad.
\end{align}
To see this, let the possible values $m$ for the matrix element $b_{jk}$ be given by the generating function
\begin{equation*}
\frac{1}{1-w_{j}w_{k}}=\sum_{m=0}^{\infty}(w_{j}w_{k})^{m}\quad.
\end{equation*}
Applying this to all matrix entries shows that $V_{N}(\bm{t})$ is given by the coefficient of the term $w_{1}^{t_{1}}w_{2}^{t_{2}}\ldots w_{N}^{t_{N}}$ in $\prod_{1\leq j<k\leq N}\frac{1}{1-w_{j}w_{k}}$. Formulating this in derivatives yields
\begin{align*}
&\hspace{-4mm}V_{N}(\bm{t})=\frac{1}{t_{1}!}\frac{\ud}{\ud w_{1}}\Big|_{w_{1}=0}^{t_{1}}\ldots \frac{1}{t_{N}!}\frac{\ud}{\ud w_{N}}\Big|_{w_{N}=0}^{t_{N}}\;\prod_{1\leq k<l\leq N}\frac{1}{1-w_{k}w_{l}}\quad.
\end{align*}
By Cauchy's integral formula the number of matrices (\ref{e:V2}) follows from this. The contour $\mathcal{C}$ encircles the origin once in the positive direction, but not the pole at $w_{k}w_{l}=1$.\\

The next step is to parametrize this contour explicitly and find a way to compute the integral for $N\rightarrow\infty$. This must be done in such a way that a combinatorial treatments is avoided. A convenient choice is
\begin{equation}
w_{j}=\sqrt{\frac{\lambda_{j}}{\lambda_{j}+1}}e^{i\varphi_{j}}\qquad\text{, with }\lambda_{j}\in\mathbb{R}_{+}\text{ and }\varphi_{j}\in[-\pi,\pi)\quad.\label{e:conpar}
\end{equation}
Later a specific value for $\lambda_{j}$ will be chosen.\\
The counting problem has now been turned into an integral over the $N$-dimensional torus
\begin{align}
&\hspace{-4mm}V_{N}(\bm{t})=\Big(\prod_{j=1}^{N}(1+\frac{1}{\lambda_{j}})^{\frac{t_{j}}{2}}\Big)(2\pi)^{-N}\int_{\mathbb{T}^{N}}\!\!\!\ud\bm{\varphi}\,e^{-i\sum_{j=1}^{N}\varphi_{j}t_{j}}\nonumber\\
&\times\prod_{1\leq k<l\leq N}\!\!\frac{\sqrt{(1+\lambda_{k})(1+\lambda_{l})}}{\sqrt{(1+\lambda_{k})(1+\lambda_{l})}-\sqrt{\lambda_{k}\lambda_{l}}}\,\frac{1}{1-\frac{\sqrt{\lambda_{k}\lambda_{l}}}{\sqrt{(1+\lambda_{k})(1+\lambda_{l})}-\sqrt{\lambda_{k}\lambda_{l}}} (e^{i(\varphi_{k}+\varphi_{l})}-1)}\quad,\label{e:V3}
\end{align}
where we have written $\ud\bm{\varphi}$ for $\ud\varphi_{1}\ldots \ud\varphi_{N}$.\\

The notations
\begin{equation*}
x=\sum_{j}t_{j}=\sum_{j=1}^{N}t_{j}\qquad\text{and}\qquad \sum_{k<l}(\varphi_{k}+\varphi_{l})=\sum_{1\leq k<l\leq N}(\varphi_{k}+\varphi_{l})
\end{equation*}
are used, when no doubt about $N$ can exist. When no summation bounds are mentioned, these will always be $1$ and $N$. The notation $a\ll b$ indicates that $a<b$ and $a/b\rightarrow0$.\\

The main tool for these integrals will be the stationary phase method, also called the saddle-point method. In the form used in this paper, the exponential of a function $f$ is integrated around its maximum $\tilde{x}$, so that
\begin{align}
&\hspace{-4mm}\lim_{\Lambda\rightarrow\infty}\int\!\ud x\,e^{\Lambda f(x)}=\lim_{\Lambda\rightarrow\infty}\exp[\Lambda f(\tilde{x})]\int\!\ud x\,\exp[\frac{\Lambda f^{(2)}(\tilde{x})}{2}(x\!-\!\tilde{x})^{2}+\frac{\Lambda f^{(3)}(\tilde{x})}{6}(x\!-\!\tilde{x})^{3}]\nonumber\\
&\times\exp[\frac{\Lambda f^{(4)}(\tilde{x})}{24}(x\!-\!\tilde{x})^{4}]\nonumber\\
&=\exp[\Lambda f(\tilde{x})]\sqrt{\frac{-2\pi}{\Lambda f^{(2)}(\tilde{x})}}\Big(1+\frac{15}{16}\frac{2(f^{(3)}(\tilde{x}))^{2}}{9\Lambda(-f^{(2)}(\tilde{x}))^{3}}+\frac{3}{4}\frac{f^{(4)}(\tilde{x})}{6\Lambda (f^{(2)}(\tilde{x}))^{2}}+\mathcal{O}(\Lambda^{-2})\Big)\quad.\label{e:spm}
\end{align}

Many counting problems can be computed asymptotically by the saddle-point method~\cite{mckay1,canfield2}. Often it is assumed that all $t_{j}$ are equal, but we show that it suffices to demand that they do not deviate too much from this symmetric case.

\section{Integrating the central part\label{sec:icp}}

The integrals in (\ref{e:V3}) are too difficult to compute in full generality. A useful approximation can be obtained from the observation that the integrand
\begin{equation}
\big|\frac{1}{1-\mu(e^{iy}-1)}\big|^{2}=\frac{1}{1-2\mu(\mu+1)(\cos(y)-1)}\qquad\text{for }y\in(-2\pi,2\pi)\label{e:intfac}
\end{equation}
is concentrated in a neighbourhood of the origin and the antipode $y=\pm2\pi$, where it takes the value $1$. This is plotted in Figure \ref{f:estplot}. For small $y$ and $\mu y$ the absolute value of the integrand factor can be written as
\begin{equation}
\big|\frac{1}{1-\mu(e^{iy}-1)}\big|=\sqrt{\frac{1}{1+\mu(\mu+1)y^{2}}}\big(1+\mathcal{O}(y^{4})\big)\,\quad.\label{e:fracest}
\end{equation}

\begin{figure}[!hb]\centering
\includegraphics[width=0.7\textwidth]{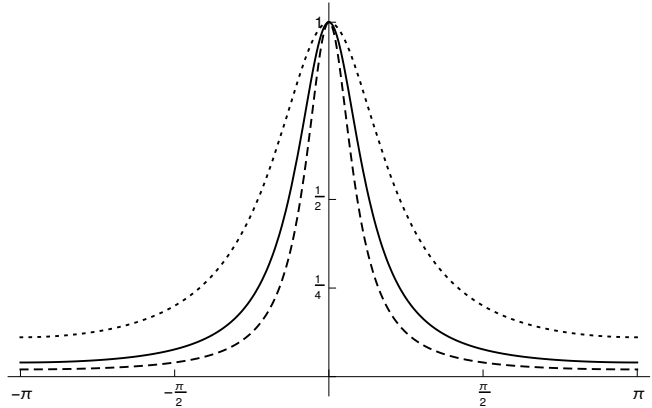}
\caption{The absolute value squared of the integrand factor (\ref{e:intfac}) for $\mu=1,2$ and $3$ in dotted, continuous and dashed lines respectively.\label{f:estplot}}
\end{figure}

It is concentrated in a small region around the origin and the antipode. The form of the region is assumed to be $[\delta_{N},\delta_{N}]^{N}$ with 
\begin{equation*}
\delta_{N}=\frac{N^{-\alpha}\zeta_{N}}{\min_{j}\{\lambda_{j}\}}\quad,
\end{equation*}
where $\alpha\in(0,1/2)$ and $\zeta_{N}$ tends slowly to infinity. In the remainder of this paragraph the integral inside this box will be computed.\\
To this end, a lower bound is introduced. Below this threshold we do not strive for accuracy. The aim is thus to find the asymptotic number $V_{N}(\bm{t})$ for configurations $\bm{t}$, such that this number is larger than the Lower bound.
\begin{dfnt}\emph{Lower bound}\label{dfnt:lb}\\
For $N$, $\alpha\in(0,1/2)$, $t_{j}\in\mathbb{N}$ and $\lambda_{j}\in\mathbb{R}_{+}^{N}$ for $j=1,\ldots,N$ we define the Lower bound by
\begin{align*}
&\hspace{-4mm}\mathcal{E}_{\alpha} = (2\pi\lambda(\lambda+1)N)^{-\frac{N}{2}}\Big(\prod_{j=1}^{N}(1+\frac{1}{\lambda_{j}})^{\frac{t_{j}}{2}}\Big)\Big(\prod_{k<l}\frac{\sqrt{(1+\lambda_{k})(1+\lambda_{l})}}{\sqrt{(1+\lambda_{k})(1+\lambda_{l})}-\sqrt{\lambda_{k}\lambda_{l}}}\Big)\\
&\times\exp[\frac{14\lambda^{2}+14\lambda-1}{12\lambda(\lambda+1)}]\exp[-N^{1-2\alpha}]\quad,
\end{align*}
where $\lambda=N^{-1}\sum_{j}\lambda_{j}$.
\end{dfnt}

The integral in $[-\delta_{N},\delta_{N}]^{N}$ can now be cast into a simpler form, where the size $\delta_{N}$ of this box can be used as an expansion parameter. The expansion used is
\begin{equation}
\frac{1}{1-\mu(\exp[iy]-1)}=\exp[\sum_{j=1}^{k}A_{j}(iy)^{j}]+\mathcal{O}(y^{k+1}(1+\mu)^{k+1})\label{e:fracexpa}\quad.
\end{equation}
The coefficients $A_{j}(\mu)$ (or $A_{j}$ if the argument is clear) are polynomials in $\mu$ of degree $j$. They are obtained as the polylogarithms
\begin{equation*}
A_{n}(\mu)=\frac{(-1)^{n}}{n!}\; \Li_{1-n}(1+\frac{1}{\mu})\quad.
\end{equation*}
The first four coefficients are
\begin{align}
&A_{1}=\mu\quad;\quad A_{2}=\frac{\mu}{2}(\mu+1)\quad;\quad A_{3}=\frac{\mu}{6}(\mu+1)(2\mu+1)\quad\nonumber\\
&\text{and}\quad A_{4}=\frac{\mu}{24}(\mu+1)(6\mu^{2}+6\mu+1)\quad.\label{e:coef}
\end{align}
The value of the parameter $\mu$ in the above formules can be approximated. Assuming that $\varepsilon_{k}$ is small compared to $\lambda$ and writing $\varepsilon=\max_{k}\varepsilon_{k}$, this is
\begin{align*}
&\hspace{-4mm}\frac{\sqrt{(\lambda+\varepsilon_{k})(\lambda+\varepsilon_{l})}}{\sqrt{(\lambda+\varepsilon_{k}+1)(\lambda+\varepsilon_{l}+1)}-\sqrt{(\lambda+\varepsilon_{k})(\lambda+\varepsilon_{l})}}\approx\lambda+\frac{\varepsilon_{k}+\varepsilon_{l}}{2}\\
&-\frac{2\lambda+1}{8\lambda(\lambda+1)}(\varepsilon_{k}-\varepsilon_{l})^{2}+\frac{2\lambda^{2}+2\lambda+1}{16\lambda^{2}(\lambda+1)^{2}}(\varepsilon_{k}^{3}-\varepsilon_{k}^{2}\varepsilon_{l}-\varepsilon_{k}\varepsilon_{l}^{2}+\varepsilon_{l}^{3})+\mathcal{O}(\frac{\varepsilon^{4}}{\lambda^{3}})\quad.
\end{align*}

Applying this in combination with (\ref{e:fracexpa}) produces the combinations
\begin{align}
&\hspace{-4mm}\sum_{k<l}(\varphi_{k}+\varphi_{l})\cdot(\frac{\sqrt{\lambda_{k}\lambda_{l}}}{\sqrt{(1+\lambda_{k})(1+\lambda_{l})}-\sqrt{\lambda_{k}\lambda_{l}}})\nonumber\\
&=\sum_{j=1}^{N}\varphi_{j}\big[\frac{N-2}{2}\lambda_{j}+\frac{N}{2}\lambda-B_{1}\big(N\varepsilon_{j}^{2}+\sum_{m}\varepsilon_{m}^{2}\big)+C_{1}\big(N\varepsilon_{j}^{3}-\varepsilon_{j}\sum_{m}\varepsilon_{m}^{2}+\sum_{m}\varepsilon_{m}^{3}\big)\big]\nonumber\\
&\times(1+\mathcal{O}(N\frac{\varepsilon^{4}}{\lambda^{4}}))\qquad;\nonumber\\
&\hspace{-4mm}\sum_{k<l}(\varphi_{k}+\varphi_{l})^{2}\cdot A_{2}(\frac{\sqrt{\lambda_{k}\lambda_{l}}}{\sqrt{(1+\lambda_{k})(1+\lambda_{l})}-\sqrt{\lambda_{k}\lambda_{l}}})\nonumber\\
&=\big[\sum_{j=1}^{N}\varphi_{j}^{2}\Big((N-2)A_{2}+\varepsilon_{j}B_{2}(N-4)-(N-4)C_{2}\varepsilon_{j}^{2}-C_{2}\sum_{m}\varepsilon_{m}^{2}\Big)\nonumber\\
&+\sum_{j=1}^{N}\varphi_{j}\Big(A_{2}\sum_{m}\varphi_{m}+2B_{2}\varepsilon_{j}\sum_{m}\varphi_{m}-2C_{2}\varepsilon_{j}^{2}\sum_{m}\varphi_{m}+D_{2}\varepsilon_{j}\sum_{m}\varepsilon_{m}\varphi_{m}\Big)\big]\nonumber\\
&\times(1+\mathcal{O}(N\frac{\varepsilon^{3}}{\lambda^{3}}))\qquad;\nonumber\\
&\hspace{-4mm}\sum_{k<l}(\varphi_{k}+\varphi_{l})^{3}\cdot A_{3}(\frac{\sqrt{\lambda_{k}\lambda_{l}}}{\sqrt{(1+\lambda_{k})(1+\lambda_{l})}-\sqrt{\lambda_{k}\lambda_{l}}})\nonumber\\
&=\big[\sum_{j}\varphi_{j}^{3}A_{3}(N-4)+3A_{3}\sum_{j}\varphi_{j}^{2}\sum_{m}\varphi_{m}\big]\times(1+\mathcal{O}(\frac{\varepsilon}{\lambda}))\qquad\text{and}\nonumber\\
&\hspace{-4mm}\sum_{k<l}(\varphi_{k}+\varphi_{l})^{4}\cdot A_{4}(\frac{\sqrt{\lambda_{k}\lambda_{l}}}{\sqrt{(1+\lambda_{k})(1+\lambda_{l})}-\sqrt{\lambda_{k}\lambda_{l}}})\nonumber\\
&=\big[\sum_{j}\varphi_{j}^{4}A_{4}(N-8)+4A_{4}\sum_{j}\varphi_{j}^{3}\sum_{m}\varphi_{m}+3A_{4}(\sum_{j}\varphi_{j}^{2})^{2}\big]\times(1+\mathcal{O}(\frac{\varepsilon}{\lambda}))\quad.\label{e:comb}
\end{align}

Here we used the additional combinations
\begin{align}
&B_{1} = \frac{2\lambda+1}{8\lambda(\lambda+1)}\quad;\quad C_{1}=\frac{2\lambda^{2}+2\lambda+1}{16\lambda^{2}(\lambda+1)^{2}}\quad;\quad B_{2}=\frac{2\lambda+1}{4}\nonumber\\
&C_{2}=\frac{2\lambda^{2}+2\lambda+1}{16\lambda(\lambda+1)}\quad;\quad D_{2}=\frac{6\lambda^{2}+6\lambda+1}{8\lambda(\lambda+1)}\label{e:coef2}
\end{align}
to simplify the notation.\\

The simplest way to compute this integral is to ensure that the linear part of the exponent is small.
Splitting $\lambda_{j}=\lambda+\varepsilon_{j}$ and choosing the value 
\begin{align*}
&\hspace{-4mm}\varepsilon_{j}=\frac{2}{N-2}\big(t_{j}-\lambda(N-1)\big)
\end{align*}
is done therefore. Combined with the assumption that $x=\sum_{j}t_{j}=\lambda N(N-1)$, this implies that $\sum_{m}\varepsilon_{m}=0$. Assuming furthermore that $|t_{j}-\lambda(N-1)|\ll \lambda N^{\frac{1}{2}+\omega}$, the error terms $|\varepsilon/\lambda|\ll N^{-\frac{1}{2}+\omega}$ follow.\\

The first step now is to focus on the integral inside the box $[-\delta_{N},\delta_{N}]^{N}$, simplify and calculate this.
\begin{rmk}
The estimates in Lemma \ref{l:funris} cause the integral (\ref{e:V3}) to depend non-trivially on $\lambda$. For that reason $\lambda$ is explicitly mentioned as an argument.
\end{rmk}
\begin{lemma}\label{l:diff}
Assume that $K,N\in\mathbb{N}$, $\omega,\alpha\in\mathbb{R}_{+}$ are chosen such that $\omega\in(0,\frac{\log(K\alpha-2) + \log\log N}{4\log N})$, $\alpha\in(0,\frac{1}{4}-\omega)$ and $K>2/\alpha+1$. Define
\begin{equation*}
\delta_{N}=\frac{N^{-\alpha}\zeta_{N}}{\min \{\lambda_{j}\}}\quad,
\end{equation*}
so that $\zeta_{N}\rightarrow\infty$ and $N^{-\delta}\zeta_{N}\rightarrow0$ for any $\delta>0$, when $N\rightarrow\infty$. If $x=\sum_{j}t_{j}$, the average matrix entry $\lambda=\frac{x}{N(N-1)}$ and
\begin{equation*}
\lim_{N\rightarrow\infty}\frac{t_{j}-\lambda(N-1)}{\lambda N^{\frac{1}{2}+\omega}}=0\quad\text{for }j=1,\ldots,N\qquad,
\end{equation*}
then the integral
\begin{align*}
&\hspace{-4mm}V_{N}(\bm{t})=\Big(\prod_{j=1}^{N}(1+\frac{1}{\lambda_{j}})^{\frac{t_{j}}{2}}\Big)(2\pi)^{-N}\int_{[-\delta_{N},\delta_{N}]^{N}}\!\!\!\ud\bm{\varphi}\,e^{-i\sum_{j=1}^{N}\varphi_{j}t_{j}}\\
&\times\prod_{1\leq k<l\leq N}\!\!\frac{\sqrt{(1+\lambda_{k})(1+\lambda_{l})}}{\sqrt{(1+\lambda_{k})(1+\lambda_{l})}-\sqrt{\lambda_{k}\lambda_{l}}}\,\frac{1}{1-\frac{\sqrt{\lambda_{k}\lambda_{l}}}{\sqrt{(1+\lambda_{k})(1+\lambda_{l})}-\sqrt{\lambda_{k}\lambda_{l}}} (e^{i(\varphi_{k}+\varphi_{l})}-1)}
\end{align*}
is given by
\begin{align*}
&\hspace{-4mm}V_{N}(\bm{t};\lambda)=\frac{2}{(2\pi)^{N}}\Big(\prod_{j=1}^{N}(1+\frac{1}{\lambda_{j}})^{\frac{t_{j}}{2}}\Big)\cdot\Big(\prod_{1\leq k<l\leq N}\!\!\frac{\sqrt{(1+\lambda_{k})(1+\lambda_{l})}}{\sqrt{(1+\lambda_{k})(1+\lambda_{l})}-\sqrt{\lambda_{k}\lambda_{l}}}\Big)\\
&\times\int_{[-\delta_{N},\delta_{N}]^{N}}\!\!\!\!\!\!\!\!\!\!\!\!\!\ud\bm{\varphi}\,\exp[-i\!\sum_{j}\!\varphi_{j}t_{j}]\exp[\sum_{n=1}^{K-1}\!i^{n}\!\sum_{k<l}\!A_{n}\big(\frac{\sqrt{\lambda{k}\lambda_{l}}}{\sqrt{(1\!+\!\lambda{k})(1\!+\!\lambda_{l})}-\sqrt{\lambda{k}\lambda_{l}}}\big)\cdot\big(\varphi_{k}\!+\!\varphi_{l}\big)^{n}]+\mathcal{D}\,,
\end{align*}
up to a difference $\mathcal{D}$ that satisfies
\begin{align*}
&\hspace{-4mm}|\mathcal{D}|\leq \mathcal{O}(N^{2-K\alpha})\frac{\sqrt{2}(1+\lambda)^{\binom{N}{2}}}{(2\pi\lambda(\lambda+1)N)^{\frac{N}{2}}}\big(1+\frac{1}{\lambda}\big)^{\frac{x}{2}}\exp[\frac{10\lambda^{2}+10\lambda+1}{4\lambda(\lambda+1)}]\\
&\exp[\frac{-1}{2\lambda(\lambda+1)N}\sum_{m}(t_{m}-\lambda(N-1))^{2}]\exp[\frac{3}{4\lambda^{2}(\lambda+1)^{2}N^{2}}\sum_{m}(t_{m}-\lambda(N-1))^{2}]\\
&\times\exp[\frac{2\lambda+1}{6\lambda^{2}(\lambda+1)^{2}N^{2}}\sum_{m}(t_{m}-\lambda(N-1))^{3}]\exp[\frac{6\lambda^{2}+6\lambda+1}{24\lambda^{3}(\lambda+1)^{3}N^{3}}\sum_{m}(t_{m}-\lambda(N-1))^{4}]\\
&\times\exp[\frac{6\lambda^{2}+6\lambda+1}{8\lambda^{3}(\lambda+1)^{3}N^{4}}\big(\sum_{m}(t_{m}-\lambda(N-1))^{2}\big)^{2}]\quad.
\end{align*}
\end{lemma}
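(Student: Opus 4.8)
The plan is to prove the lemma by replacing, factor by factor, each exact term $\frac{1}{1-\mu_{kl}(e^{i(\varphi_k+\varphi_l)}-1)}$ with the truncated exponential $\exp[\sum_{n=1}^{K-1} A_n(\mu_{kl})(i(\varphi_k+\varphi_l))^n]$ supplied by the expansion (\ref{e:fracexpa}), where $\mu_{kl}=\frac{\sqrt{\lambda_k\lambda_l}}{\sqrt{(1+\lambda_k)(1+\lambda_l)}-\sqrt{\lambda_k\lambda_l}}$ and $y_{kl}=\varphi_k+\varphi_l$. The product of the truncated factors is exactly the integrand appearing on the right-hand side, so the entire content of the lemma is the estimate of the error $\mathcal D$ produced by this substitution. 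The factor $2$ is understood to incorporate the second concentration region at the antipode $\bm\varphi\approx\pm\bm\pi$ noted after (\ref{e:intfac}), whose contribution equals that of the origin by the symmetry of the integrand and is controlled by the region-restriction step of Paragraph \ref{sec:rir}; the genuinely analytic work is the truncation estimate below.

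First I would record the single-factor bound. By (\ref{e:fracexpa}) one has $f_{kl}=g_{kl}+r_{kl}$ with $|r_{kl}|=\mathcal O\big(y_{kl}^{K}(1+\mu_{kl})^{K}\big)$, where $f_{kl}$ and $g_{kl}$ denote the exact and truncated factors. Inside the box $|y_{kl}|\le 2\delta_N$, and the parameter constraints ($\alpha<1/4-\omega$, $\zeta_N$ slowly growing, and $\mu_{kl}=\lambda+\mathcal O(\varepsilon)$ with $\varepsilon/\lambda\ll N^{-1/2+\omega}$) guarantee $y_{kl}(1+\mu_{kl})\to 0$, legitimising the expansion. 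From (\ref{e:intfac}) the exact factor obeys $|f_{kl}|^{2}=\big(1-2\mu_{kl}(\mu_{kl}+1)(\cos y_{kl}-1)\big)^{-1}\le 1$, and since on the box the real part of the truncated exponent is dominated by its quadratic term $-A_2 y_{kl}^2\le 0$, one also has $|g_{kl}|\le 1$ up to a harmless $1+o(1)$.

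Next I would combine the factors. Writing $\prod f_{kl}-\prod g_{kl}$ as a telescoping sum $\sum_{(k,l)} r_{kl}\prod_{\prec}f\prod_{\succ}g$ and using $|f_{kl}|,|g_{kl}|\le 1$ bounds it by $\sum_{k<l}|r_{kl}|\le \binom N2\max_{k<l}|r_{kl}|$. With $\delta_N=N^{-\alpha}\zeta_N/\min_j\lambda_j$ and $\mu_{kl}=\lambda+o(\lambda)$ the per-factor bound is $\mathcal O\big((2\delta_N)^{K}(1+\lambda)^{K}\big)=\mathcal O\big((N^{-\alpha}\zeta_N)^K(1+1/\lambda)^{K}\big)$, and multiplying by the $\binom N2=\mathcal O(N^2)$ factors gives a relative error $\mathcal O(N^{2-K\alpha})$, the slowly varying $\zeta_N^K$ and $\lambda$-dependence being absorbed; this is precisely what $K>2/\alpha+1$ forces to vanish. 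Equivalently $\prod f_{kl}=\prod g_{kl}\cdot\prod(1+\epsilon_{kl})$ with $\sum|\epsilon_{kl}|=\mathcal O(N^{2-K\alpha})$, so $\big|\prod(1+\epsilon_{kl})-1\big|\le e^{\sum|\epsilon_{kl}|}-1=\mathcal O(N^{2-K\alpha})$.

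Finally I would integrate. Since $|e^{-i\sum\varphi_j t_j}|=1$, multiplying the pointwise bound by the prefactors shows $|\mathcal D|$ is controlled by $\mathcal O(N^{2-K\alpha})$ times the prefactors times $\int_{[-\delta_N,\delta_N]^N}\big|\prod g_{kl}\big|\,\ud\bm\varphi$. Because $\big|\prod g_{kl}\big|=\exp[\sum_{k<l}\sum_n \re\,(A_n(\mu_{kl})(iy_{kl})^n)]$ retains only the even powers of $y_{kl}$, this is a Gaussian integral in the quadratic form $\sum_{k<l}(\varphi_k+\varphi_l)^2=(N-2)\sum_j\varphi_j^2+(\sum_j\varphi_j)^2$ with positive quartic corrections; evaluating it under $\varepsilon_j=\frac{2}{N-2}(t_j-\lambda(N-1))$, $\sum_j\varepsilon_j=0$, and extending the box to $\mathbb{R}^N$ (negligible tails) reproduces the stated product of exponentials for $|\mathcal D|$, the shifted constants and the positive quartic term arising exactly because only the real, even-$n$ parts of $A_n(iy)^n$ survive in the modulus. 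The main obstacle is this last step: securing the clean $\mathcal O(N^{2-K\alpha})$ uniformly over the box while simultaneously bounding the modulus integral by a Gaussian with the correct explicit constants, that is, controlling the $\binom N2$-fold product together with the $\lambda$-dependent factors $(1+\mu_{kl})^K$ without letting the accumulated error or the prefactors degrade the estimate.
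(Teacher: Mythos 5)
Your proposal, in its multiplicative form, follows essentially the same route as the paper's own proof: the paper also absorbs the $K$-th and higher order terms of (\ref{e:fracexpa}) into an exponential $e^{h}$, bounds the difference via $|\int\ud x\,e^{f}(e^{h}-1)|\leq\mathcal{O}(\sup_{x}|e^{h}-1|)\cdot\int\ud x\,|e^{f}|$ with $\sup_{x}|e^{h}-1|=\mathcal{O}(N^{2-K\alpha})$, and then observes that taking the modulus of the truncated integrand erases exactly the imaginary contributions --- in the notation of (\ref{e:coef}) and (\ref{e:coef2}) it sets $A_{3}$, $B_{1}$ and $C_{1}$ to zero --- so that the modulus integral is evaluated by the computation of Lemma \ref{l:labval}; this is precisely your ``only the real, even-$n$ parts survive'' Gaussian step, and the stated bound with its shifted constants is what that computation returns.

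One link in your chain, however, is wrong and would fail if relied upon: the additive telescoping bound and the multiplicative factorisation are \emph{not} equivalent, and only the latter proves the lemma. The telescoping estimate $|\prod f_{kl}-\prod g_{kl}|\leq\sum_{k<l}|r_{kl}|$ obtained from $|f_{kl}|,|g_{kl}|\leq1$ is an \emph{absolute} pointwise bound, not a relative one; integrating it over the box produces the prefactors times $(2\delta_{N})^{N}\cdot\mathcal{O}(N^{2-K\alpha})$, and since $\delta_{N}\sqrt{\lambda(\lambda+1)N}\approx\zeta_{N}\sqrt{(1+\lambda)/\lambda}\,N^{\frac{1}{2}-\alpha}\rightarrow\infty$ for $\alpha<\frac{1}{4}$, this overshoots the claimed bound by a super-exponential factor of order $\big(N^{\frac{1}{2}-\alpha}\zeta_{N}\big)^{N}$, i.e.\ it is useless: the flat bound cannot see the Gaussian concentration of the integrand. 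What actually carries the argument is the relative pointwise bound $|\prod f_{kl}-\prod g_{kl}|\leq\mathcal{O}(N^{2-K\alpha})\,|\prod g_{kl}|$ coming from $\prod f_{kl}=\prod g_{kl}\prod(1+\epsilon_{kl})$, because only then does the decay of $|\prod g_{kl}|$ survive under the integral sign; this factorisation in turn needs the uniform lower bound $|g_{kl}|\geq1-o(1)$ on the box, which you should state explicitly (it holds because $\lambda(\lambda+1)\delta_{N}^{2}=\frac{\lambda+1}{\lambda}N^{-2\alpha}\zeta_{N}^{2}\rightarrow0$ under $\lambda>C/\log N$). Since your final integration step does invoke the multiplicative bound, your argument stands as written; but delete the telescoping paragraph or demote it to motivation, and do not call the two bounds equivalent.
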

\begin{proof}
To the fraction 
\begin{equation*}
\Big(1-\frac{\sqrt{\lambda_{k}\lambda_{l}}}{\sqrt{(1+\lambda_{k})(1+\lambda_{l})}-\sqrt{\lambda_{k}\lambda_{l}}} (e^{i(\varphi_{k}+\varphi_{l})}-1)\Big)^{-1}
\end{equation*}
in the integral (\ref{e:V3}) the expansion (\ref{e:fracexpa}) in combination with (\ref{e:coef}) and (\ref{e:coef2}) is applied. To prove that contributions in (\ref{e:fracexpa}) of $K$-th order or higher are irrelevant, we put these in the exponential $\exp[h(x)]$. To estimate their contribution, the estimate
\begin{equation*}
|\int\ud x\,e^{f(x)}(e^{h(x)}-1)|\leq \mathcal{O}(\sup_{x} |e^{h(x)}-1|)\cdot \int \ud x\, |e^{f(x)}|
\end{equation*}
is applied to the integral. Taking the absolute value of the integrand sets the imaginary parts of the exponential to zero. In terms of (\ref{e:coef}) and (\ref{e:coef2}) this means that $A_{3}$, $B_{1}$ and $C_{1}$ are set to zero. This integral is calculated in Lemma \ref{l:labval}. Taking this result and setting these coefficients to zero completes the proof.
\end{proof}

\begin{lemma}\label{l:labval}
Assume that $K,N\in\mathbb{N}$, $\omega,\alpha\in\mathbb{R}_{+}$ are chosen such that $\omega\in(0,\frac{\log(K\alpha-2) + \log\log N}{4\log N})$, $\alpha\in(0,\frac{1}{4}-\omega)$ and $K>2/\alpha+1$. Define
\begin{equation*}
\delta_{N}=\frac{N^{-\alpha}\zeta_{N}}{\min \{\lambda_{j}\}}\quad,
\end{equation*}
so that $\zeta_{N}\rightarrow\infty$ and $N^{-\delta}\zeta_{N}\rightarrow0$ for any $\delta>0$, when $N\rightarrow\infty$. If $x=\sum_{j}t_{j}$, the average matrix entry $\lambda=\frac{x}{N(N-1)}>\frac{C}{\log N}$ and
\begin{equation}
\lim_{N\rightarrow\infty}\frac{t_{j}-\lambda(N-1)}{\lambda N^{\frac{1}{2}+\omega}}=0\quad\text{for }j=1,\ldots,N\qquad,
\end{equation}
then the integral
\begin{align*}
&\hspace{-4mm}V_{N}(\bm{t};\lambda)=\frac{2}{(2\pi)^{N}}\Big(\prod_{j=1}^{N}(1+\frac{1}{\lambda_{j}})^{\frac{t_{j}}{2}}\Big)\cdot\Big(\prod_{1\leq k<l\leq N}\!\!\frac{\sqrt{(1+\lambda_{k})(1+\lambda_{l})}}{\sqrt{(1+\lambda_{k})(1+\lambda_{l})}-\sqrt{\lambda_{k}\lambda_{l}}}\Big)\\
&\times\int_{[-\delta_{N},\delta_{N}]^{N}}\!\!\!\!\!\!\!\!\!\!\!\!\!\ud\bm{\varphi}\,\exp[-i\!\sum_{j}\!\varphi_{j}t_{j}]\exp[\sum_{n=1}^{K-1}\!i^{n}\!\sum_{k<l}\!A_{n}\big(\frac{\sqrt{\lambda{k}\lambda_{l}}}{\sqrt{(1\!+\!\lambda{k})(1\!+\!\lambda_{l})}-\sqrt{\lambda{k}\lambda_{l}}}\big)\cdot\big(\varphi_{k}\!+\!\varphi_{l}\big)^{n}]
\end{align*}
is asymptotically ($N\rightarrow\infty$) given by
\begin{align*}
&\hspace{-4mm}V_{N}(\bm{t};\lambda)=\frac{\sqrt{2}}{(2\pi\lambda(\lambda+1)N)^{\frac{N}{2}}}\big[\prod_{n}(1+\frac{1}{\lambda_{n}})^{\frac{t_{n}}{2}}\big]\big[\prod_{k<l}\frac{\sqrt{(1+\lambda_{k})(1+\lambda_{l})}}{\sqrt{(1+\lambda_{k})(1+\lambda_{l})}-\sqrt{\lambda_{k}\lambda_{l}}}\big]\\
&\times\exp[\frac{14\lambda^{2}+14\lambda-1}{12\lambda(\lambda+1)}]\exp[\frac{\sum_{m}\varepsilon_{m}^{2}}{16\lambda^{2}(\lambda+1)^{2}}]\exp[-\frac{(2\lambda+1)^{2}}{128\lambda^{3}(\lambda+1)^{3}}(\sum_{m}\varepsilon_{m}^{2})^{2}]\\
&\times\exp[-\frac{(2\lambda+1)^{2}N}{128\lambda^{3}(\lambda+1)^{3}}\sum_{m}\varepsilon_{m}^{4}]\times\Big(1+\mathcal{O}(N^{-\frac{1}{2}+6\omega}+N^{2+\frac{1}{3C}-K\alpha}\exp[N^{4\omega}])\Big)\\
&=\frac{\sqrt{2}(1+\lambda)^{\binom{N}{2}}}{(2\pi\lambda(\lambda+1)N)^{\frac{N}{2}}}\big(1+\frac{1}{\lambda}\big)^{\frac{x}{2}}\exp[\frac{14\lambda^{2}+14\lambda-1}{12\lambda(\lambda+1)}]\\
&\exp[\frac{-1}{2\lambda(\lambda+1)N}\sum_{m}(t_{m}-\lambda(N-1))^{2}]\exp[\frac{-1}{\lambda(\lambda+1)N^{2}}\sum_{m}(t_{m}-\lambda(N-1))^{2}]\\
&\times\exp[\frac{2\lambda+1}{6\lambda^{2}(\lambda+1)^{2}N^{2}}\sum_{m}(t_{m}-\lambda(N-1))^{3}]\exp[-\frac{3\lambda^{2}+3\lambda+1}{12\lambda^{3}(\lambda+1)^{3}N^{3}}\sum_{m}(t_{m}-\lambda(N-1))^{4}]\\
&\times\exp[\frac{1}{4\lambda^{2}(\lambda+1)^{2}N^{4}}\big(\sum_{m}(t_{m}-\lambda(N-1))^{2}\big)^{2}]\times\Big(1+\mathcal{O}(N^{-\frac{1}{2}+6\omega}+N^{2+\frac{1}{3C}-K\alpha}\exp[N^{4\omega}])\Big)\quad.
\end{align*}
This is much larger than the Lower bound from Definition \ref{dfnt:lb}
\begin{equation*}
\frac{V_{N}(\bm{t};\lambda)}{\mathcal{E}_{\alpha}}\rightarrow\infty\quad.
\end{equation*}
\end{lemma}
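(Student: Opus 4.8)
The statement is a multivariate stationary-phase evaluation of the box integral, so the plan is to execute the expansion \eqref{e:spm} in $N$ variables while tracking every factor that survives into the $O(1)$ exponent. First I would substitute the coefficients \eqref{e:coef}, \eqref{e:coef2} and the combinations \eqref{e:comb} to organise the exponent as a polynomial in $\bm\varphi$ graded by total degree. The whole construction rests on the choice $\varepsilon_j=\frac{2}{N-2}\bigl(t_j-\lambda(N-1)\bigr)$ together with $\sum_m\varepsilon_m=0$ (forced by $x=\lambda N(N-1)$): the leading linear piece $i\sum_j\varphi_j\bigl(\frac{N-2}{2}\lambda_j+\frac N2\lambda\bigr)$ equals $i\sum_j\varphi_j t_j$ and cancels the oscillating phase $e^{-i\sum_j\varphi_j t_j}$ exactly, leaving only a residual linear source $i\,c\cdot\bm\varphi$ whose coefficients $c_j$ are of order $\varepsilon^2$.

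After this cancellation the quadratic part reads $-\bigl[(N-2)A_2\sum_j\varphi_j^2+A_2(\sum_j\varphi_j)^2\bigr]=-\tfrac12\bm\varphi^{T}Q\bm\varphi$ with $Q=\lambda(\lambda+1)\bigl[(N-2)I_N+J\bigr]$, $J$ the all-ones matrix. I would diagonalise $Q$ — eigenvalue $\lambda(\lambda+1)(2N-2)$ once and $\lambda(\lambda+1)(N-2)$ with multiplicity $N-1$ — to read off both the normalisation $(2\pi)^{N/2}/\sqrt{\det Q}$ and the propagator $Q^{-1}=\frac{1}{\lambda(\lambda+1)(N-2)}\bigl(I_N-\frac{1}{2N-2}J\bigr)$, so that $\langle\varphi_j^2\rangle\sim\frac{1}{\lambda(\lambda+1)N}$ under the Gaussian average $\langle\cdot\rangle$. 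Against the target prefactor $\sqrt2/(2\pi\lambda(\lambda+1)N)^{N/2}$ the normalisation carries the extra factor $N^{N/2}/\bigl[(N-2)^{(N-1)/2}\sqrt{N-1}\bigr]\to e$, i.e. a $+1$ in the $O(1)$ exponent, which is the unit of $\frac76-\frac{1}{12\lambda(\lambda+1)}=\frac{14\lambda^2+14\lambda-1}{12\lambda(\lambda+1)}$.

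The remaining terms I would treat perturbatively: write the exponent as $-\tfrac12\bm\varphi^{T}Q\bm\varphi+g(\bm\varphi)$, where $g$ collects the residual linear source, the $\varepsilon$-corrections to the quadratic part, and the full cubic and quartic terms of \eqref{e:comb}, and use the linked-cluster expansion $\log\langle e^{g}\rangle=\langle g\rangle_c+\tfrac12\langle g^2\rangle_c+\cdots$ so that only connected Wick contractions survive. With the scales $\varphi_j\sim(\lambda(\lambda+1)N)^{-1/2}$ and $|\varepsilon_j/\lambda|\ll N^{\omega-1/2}$ I would verify: the quartic term ($\langle g_4\rangle_c$) and the square of the cubic term ($\tfrac12\langle g_3^2\rangle_c$) are each $O(1)$ and, with the determinant's $+1$, assemble into $\exp[\frac{14\lambda^2+14\lambda-1}{12\lambda(\lambda+1)}]$; the first-order $\varepsilon^2$-corrections to the Gaussian weight (the odd-in-$\varepsilon$ ones dropping by $\sum_m\varepsilon_m=0$) give the factor quadratic in $\varepsilon$, while completing the square on the source produces $e^{-\frac12 c^{T}Q^{-1}c}$, which is exactly the two quartic-in-$\varepsilon$ factors $(\sum_m\varepsilon_m^2)^2$ and $\sum_m\varepsilon_m^4$. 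Since $\alpha<\tfrac12$ makes $\delta_N\gg(\lambda(\lambda+1)N)^{-1/2}$, extending the domain from $[-\delta_N,\delta_N]^N$ to $\mathbb R^N$ costs only a relative $e^{-cN^{1-2\alpha}}$, far below the kept error. Rewriting $\sum_m\varepsilon_m^k$ through $t_m-\lambda(N-1)$ and the products $\prod_j(1+\lambda_j^{-1})^{t_j/2}$, $\prod_{k<l}(\cdots)$ in terms of $\lambda$ via $x=\lambda N(N-1)$ then yields the second displayed form, and $V_N/\mathcal E_\alpha\to\infty$ follows at once because $\mathcal E_\alpha$ carries the factor $e^{-N^{1-2\alpha}}$.

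The principal difficulty is not the leading asymptotics but the uniform error control in the regime $\lambda>C/\log N$. There factors of the shape $\exp[\mathrm{const}/\lambda]$ grow like a power $N^{\mathrm{const}/C}$, and the $\varepsilon$-quartic exponentials reach size $\exp[O(N^{4\omega}/\lambda^2)]$, i.e. $\exp[N^{4\omega}]$ scale; these multiply the high-degree (degrees $5$ through $K-1$) remainders of $g$ and the carried-over truncation bound $\mathcal D$ of Lemma \ref{l:diff}, whose scale is $N^{2-K\alpha}$. Balancing the convergent perturbative tail $N^{-1/2+6\omega}$ against this amplified truncation contribution is precisely what produces the composite error $\mathcal O(N^{-1/2+6\omega}+N^{2+1/(3C)-K\alpha}\exp[N^{4\omega}])$ and pins down the admissible windows for $\omega,\alpha,K$ stated in the hypotheses.
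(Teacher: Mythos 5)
Your proposal is correct in substance and lands on the same asymptotics, but it is organised quite differently from the paper's own proof, so a comparison is worthwhile. The paper never diagonalises the full quadratic form. Instead it inserts Fourier-represented $\delta$-functions for the collective variables $S_{1}=\sum_{m}\varphi_{m}$, $S_{2}=\sum_{m}\varphi_{m}^{2}$, $T_{3}=\sum_{m}\varepsilon_{m}\varphi_{m}$, $T_{4}=\sum_{m}\varepsilon_{m}^{2}\varphi_{m}$, which decouples the integral (\ref{e:labval1}) into $N$ identical one-dimensional integrals, each evaluated by the saddle-point formula (\ref{e:spm}); afterwards the auxiliary integrals are done in sequence (the $\tau_{2},\tau_{3},\tau_{4}$ integrations collapse to $\delta$-functions fixing $S_{2}=\frac{N}{2A_{2}(N-2)}$, $T_{3}=T_{4}=0$, and the $S_{1}$, $\tau_{1}$ integrals are Gaussian). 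Your route---treating $Q=\lambda(\lambda+1)[(N-2)I_{N}+J]$ exactly, reading $\det Q$ and $Q^{-1}$ off its spectrum, and pushing all remaining terms into a second-order linked-cluster expansion---buys conceptual transparency: the constant $\frac{14\lambda^{2}+14\lambda-1}{12\lambda(\lambda+1)}$ splits cleanly into the determinant's $+1$ plus $\langle g_{4}\rangle_{c}+\tfrac12\langle g_{3}^{2}\rangle_{c}$, and your completion of the square $e^{-\frac12 c^{T}Q^{-1}c}$---provided the $J$-part of $Q^{-1}$ is kept, as you do---reproduces exactly the two quartic-in-$\varepsilon$ exponentials of (\ref{e:labval3}), including their coefficients $\frac{(2\lambda+1)^{2}}{128\lambda^{3}(\lambda+1)^{3}}$. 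What the paper's factorised route buys in exchange is that every error term is attached to a one-dimensional integral, which makes the uniform control for $\lambda>C/\log N$ and the final comparison against the difference $\mathcal{D}$ of Lemma \ref{l:diff} essentially mechanical.

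Two caveats on your execution. First, since $i^{4}A_{4}(\varphi_{k}+\varphi_{l})^{4}=+A_{4}(\varphi_{k}+\varphi_{l})^{4}$ with $A_{4}>0$, the truncated exponent has a \emph{positive} quartic part, so $\langle e^{g}\rangle$ taken literally over $\mathbb{R}^{N}$ diverges; you must truncate the cumulant series (equivalently, expand $e^{g}$ to the needed polynomial order) \emph{inside} the box $[-\delta_{N},\delta_{N}]^{N}$ first, and only then extend the domain of the resulting Gaussian moments, where your $e^{-cN^{1-2\alpha}}$ tail estimate is valid; your write-up leaves the order of these two steps ambiguous, and in the reversed order the argument fails. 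Second, the amplification factor in the error analysis scales as $\exp[O(\lambda N^{4\omega})]$ (the paper's exponent is $\frac{(2\lambda+1)^{2}\lambda}{8(\lambda+1)^{3}}N^{4\omega}$), not $\exp[O(N^{4\omega}/\lambda^{2})]$ as you state; your conclusion that it is of scale $\exp[N^{4\omega}]$, hence the requirement $\exp[N^{4\omega}]\ll N^{K\alpha-2}$ that pins down the admissible window for $\omega$, nevertheless agrees with the paper.
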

\begin{proof}
Define $\varepsilon_{j}=\frac{2}{N-2}\big(t_{j}-\lambda(N-1)\big)$ and assume that $|\varepsilon_{j}|\leq \lambda N^{-\frac{1}{2}+\omega}$ with $0<\omega<1/14$. It follows that $\sum_{j}\varepsilon_{j}=0$.\\

To the integral $V_{N}(\bm{t};\lambda)$ the expansion (\ref{e:fracexpa}) for $k=4$ in combination with (\ref{e:coef}) and (\ref{e:coef2}) is applied. It will follow automatically that the higher orders ($K>5$) in this expansion will yield asymptotically irrelevant factors. This expansion produces the combinations (\ref{e:comb}). 
Introducing $\delta$-functions for $S_{1}=\sum_{m}\varphi_{m}$, $S_{2}=\sum_{m}\varphi_{m}^{2}$, $T_{3}=\sum_{m}\varepsilon_{m}\varphi_{m}$ and $T_{4}=\sum_{m}\varepsilon_{m}^{2}\varphi_{m}$ through their Fourier representation yields the integral
\begin{align}
&\hspace{-4mm}V_{N}(\bm{t};\lambda)=\frac{2}{(2\pi)^{N}}\big[\prod_{n}(1+\frac{1}{\lambda_{n}})^{\frac{t_{n}}{2}}\big]\big[\prod_{k<l}\frac{\sqrt{(1+\lambda_{k})(1+\lambda_{l})}}{\sqrt{(1+\lambda_{k})(1+\lambda_{l})}-\sqrt{\lambda_{k}\lambda_{l}}}\big]\int\!\!\ud\tau_{1}\!\!\int\!\!\ud S_{1}\!\!\int\!\!\ud\tau_{3}\nonumber\\
&\times\int\!\!\ud T_{3}\!\!\int\!\!\ud T_{4}\!\!\int\!\!\ud \tau_{4}\!\!\int\!\!\ud S_{2}\!\!\int\!\!\ud\tau_{2}\,\exp[2\pi i(\tau_{1}S_{1}\!+\!\tau_{2}S_{2}\!+\!\tau_{3}T_{3}\!+\!\tau_{4}T_{4})\nonumber\\
&-A_{2}S_{1}^{2}-2B_{2}S_{1}T_{3}+2C_{2}S_{1}T_{4}-2D_{2}T_{3}^{2}+3A_{4}S_{2}^{2}]\nonumber\\
&\times\Big\{\prod_{j}\int_{-\delta_{N}}^{\delta_{N}}\!\!\!\!\!\ud\varphi_{j}\,\exp\big[i\varphi_{j}\big(-B_{1}N\varepsilon_{j}^{2}-B_{1}\sum_{m}\varepsilon_{m}^{2}-2\pi\tau_{1}-3A_{3}S_{2}\nonumber\\
&+C_{1}N\varepsilon_{j}^{3}-C_{1}\varepsilon_{j}\sum_{m}\varepsilon_{m}^{2}+C_{1}\sum_{m}\varepsilon_{m}^{3}-2\pi\tau_{3}\varepsilon_{j}\big)\big]\nonumber\\
&\times \exp\big[-\varphi_{j}^{2}\big(A_{2}(N-2)+B_{2}(N-4)\varepsilon_{j}-(N-4)C_{2}\varepsilon_{j}^{2}-C_{2}\sum_{m}\varepsilon_{m}^{2}+2\pi i\tau_{2}\big)\big]\nonumber\\
&\times \exp\big[-i\varphi_{j}^{3}\big(A_{3}(N-4)+4iA_{4}S_{1}\big)\big]\nonumber\\
&\times \exp\big[\varphi_{j}^{4}\big(A_{4}(N\!-\!8)\big)\big]\,\Big\}\label{e:labval1}\quad.
\end{align}
To ensure that that overall error consists of asymptotically irrelevant factors only, the $\varphi_{j}$-integral must be computed up to $\mathcal{O}(N^{-1})$. Dividing the integration parameter $\varphi_{j}$ by $\sqrt{A_{2}(N-2)}$ shows that the $\varphi_{j}$-integral is of the form
\begin{align}
&\hspace{-4mm}\frac{1}{\sqrt{A_{2}(N-2)}}\int_{-\delta_{N}\sqrt{A_{2}(N-2)}}^{\delta_{N}\sqrt{A_{2}(N-2)}}\!\!\!\!\!\!\ud\varphi\,\exp[\frac{i\varphi Q_{1}}{\sqrt{A_{2}(N-2)}}-\varphi^{2}Q_{2}-\frac{i\varphi^{3}Q_{3}}{(A_{2}(N-2))^{3/2}}+\frac{\varphi^{4}Q_{4}}{(A_{2}(N-2))^{2}}]\nonumber\\
&=\sqrt{\frac{\pi}{A_{2}(N-2)}}\big[Q_{2}+\frac{3iQ_{3}\tilde{\varphi}}{(A_{2}(N-2))^{3/2}}\big]^{-\frac{1}{2}}\nonumber\\
&\times\exp[\frac{iQ_{1}\tilde{\varphi}}{\sqrt{A_{2}(N-2)}}-Q_{2}\tilde{\varphi}^{2}-\frac{iQ_{3}\tilde{\varphi}^{3}}{(A_{2}(N-2))^{3/2}}+\frac{Q_{4}\tilde{\varphi}^{4}}{A_{2}^{2}(N-2)^{2}}]\nonumber\\
&\times\big\{1-\frac{15Q_{3}^{2}}{16(A_{2}(N-2))^{3}(Q_{2}+\frac{3iQ_{3}\tilde{\varphi}}{(A_{2}(N-2))^{3/2}})^{3}}+\frac{3Q_{4}}{4A_{2}^{2}(N-2)^{2}(Q_{2}+\frac{3iQ_{3}\tilde{\varphi}}{(A_{2}(N-2))^{3/2}})^{2}}\big\}\quad,\label{e:labval2}
\end{align}
which is calculated by the (\ref{e:spm}) around the maximum $\tilde{\varphi}$ of the integrand. Observing that $Q_{1}=\mathcal{O}(N^{2\omega})$, $Q_{2}=\mathcal{O}(1)$ and $Q_{3,4}=\mathcal{O}(N)$, shows that
\begin{equation*}
\tilde{\varphi}=\frac{iQ_{1}}{2Q_{2}\sqrt{A_{2}(N-2)}}+\mathcal{O}(N^{-\frac{3}{2}+4\omega})=\mathcal{O}(N^{-\frac{1}{2}+2\omega})
\end{equation*}
is sufficient for the desired accuracy. This implies that
\begin{equation*}
\exp[\frac{iQ_{1}\tilde{\varphi}}{\sqrt{A_{2}(N-2)}}-Q_{2}\tilde{\varphi}^{2}-\frac{iQ_{3}\tilde{\varphi}^{3}}{(A_{2}(N-2))^{3/2}}]=\exp\big[-\frac{Q_{1}^{2}}{4A_{2}(N-2)}\big]\times\big(1+\mathcal{O}(N^{-2+6\omega})\big)\quad.
\end{equation*}
The terms in square and curly brackets are then rewritten using
\begin{equation*}
\frac{1}{\sqrt{1+y}}\approx e^{-\frac{y}{2}+\frac{y^{2}}{4}}\qquad\text{and}\qquad 1+z\approx \exp[z]
\end{equation*}
respectively. Using the same order of factors as in (\ref{e:labval2}), the result of the $\varphi_{j}$-integral is
\begin{align*}
&\hspace{-4mm}\sqrt{\frac{\pi}{A_{2}(N-2)}}\exp\Big[-\frac{B_{2}(N-4)\varepsilon_{j}}{2A_{2}(N-2)}+\frac{C_{2}(N-4)\varepsilon_{j}^{2}}{2A_{2}(N-2)}+\frac{C_{2}\sum_{m}\varepsilon_{m}^{2}}{2A_{2}(N-2)}-\frac{i\pi\tau_{2}}{A_{2}(N-2)}\\
&-\frac{3A_{3}B_{1}N(N-4)\varepsilon_{j}^{2}}{4A_{2}^{2}(N-2)^{2}}-\frac{3A_{3}B_{1}(N-4)\sum_{m}\varepsilon_{m}^{2}}{4A_{2}^{2}(N-2)^{2}}-\frac{3\pi\tau_{1}A_{3}(N-4)}{2A_{2}^{2}(N-2)^{2}}-\frac{9A_{3}^{2}S_{2}(N-4)}{4A_{2}^{2}(N-2)^{2}}\Big]\\
&\times\exp\big[\frac{B_{2}^{2}(N-4)^{2}\varepsilon_{j}^{2}}{4A_{2}^{2}(N-2)^{2}}\big]\exp\Big[-\frac{B_{1}^{2}N^{2}\varepsilon_{j}^{4}}{4A_{2}(N-2)}-\frac{B_{1}^{2}N\varepsilon_{j}^{2}\sum_{m}\varepsilon_{m}^{2}}{2A_{2}(N-2)}\\
&-\frac{\pi\tau_{1}B_{1}N\varepsilon_{j}^{2}}{A_{2}(N-2)}-\frac{3A_{3}B_{1}S_{2}N\varepsilon_{j}^{2}}{2A_{2}(N-2)}-\frac{B_{1}^{2}(\sum_{m}\varepsilon_{m}^{2})^{2}}{4A_{2}(N-2)}-\frac{\pi\tau_{1}B_{1}\sum_{m}\varepsilon_{m}^{2}}{A_{2}(N-2)}-\frac{3A_{3}B_{1}S_{2}\sum_{m}\varepsilon_{m}^{2}}{2A_{2}(N-2)}\\
&-\frac{\pi^{2}\tau_{1}^{2}}{A_{2}(N-2)}-\frac{3\pi\tau_{1}A_{3}S_{2}}{A_{2}(N-2)}-\frac{9A_{3}^{2}S_{2}^{2}}{4A_{2}(N-2)}\Big]\exp\big[-\frac{15A_{3}^{2}(N-4)^{2}}{16A_{2}^{3}(N-2)^{3}}\big]\exp\big[\frac{3A_{4}(N-8)}{4A_{2}^{2}(N-2)^{2}}\big]\;.
\end{align*}
Integrating now $\tau_{2}$ yields a delta function that assigns the value
\begin{equation*}
S_{2}=\frac{N}{2A_{2}(N-2)}\quad.
\end{equation*}
Doing the same for $\tau_{3}$ and $\tau_{4}$ yields $T_{3}=0$ and $T_{4}=0$. The $S_{1}$-integral is
\begin{equation*}
\int \ud S_{1}\,\exp[2\pi i\tau_{1}S_{1}-A_{2}S_{1}^{2}]=\sqrt{\frac{\pi}{A_{2}}}\exp[-\frac{\pi^{2}\tau_{1}^{2}}{A_{2}}]\quad.
\end{equation*}
and the final integral
\begin{align*}
&\hspace{-4mm}\int \ud\tau_{1}\,\exp\big[-\frac{2\pi^{2}(N-1)\tau_{1}^{2}}{A_{2}(N-2)}-\frac{3\pi\tau_{1}A_{3}N}{A_{2}^{2}(N-2)}-\frac{2\pi\tau_{1}B_{1}N\sum_{m}\varepsilon_{m}^{2}}{A_{2}(N-2)}\big]\\
&=\sqrt{\frac{A_{2}(N-2)}{2\pi (N-1)}}\exp\big[\frac{9A_{3}^{2}N^{2}}{8A_{2}^{3}(N-1)(N-2)}+\frac{B_{1}^{2}N^{2}(\sum_{m}\varepsilon_{m}^{2})^{2}}{2A_{2}(N-1)(N-2)}+\frac{3A_{3}B_{1}N^{2}\sum_{m}\varepsilon_{m}^{2}}{2A_{2}^{2}(N-1)(N-2)}\big]\quad.
\end{align*}
Putting this all together yields
\begin{align}
&\hspace{-4mm}V_{N}(\bm{t};\lambda)=\frac{\sqrt{2}}{(2\pi\lambda(\lambda+1)N)^{\frac{N}{2}}}\big[\prod_{n}(1+\frac{1}{\lambda_{n}})^{\frac{t_{n}}{2}}\big]\big[\prod_{k<l}\frac{\sqrt{(1+\lambda_{k})(1+\lambda_{l})}}{\sqrt{(1+\lambda_{k})(1+\lambda_{l})}-\sqrt{\lambda_{k}\lambda_{l}}}\big]\nonumber\\
&\times\exp[\frac{14\lambda^{2}+14\lambda-1}{12\lambda(\lambda+1)}]\exp[\frac{\sum_{m}\varepsilon_{m}^{2}}{16\lambda^{2}(\lambda+1)^{2}}]\exp[-\frac{(2\lambda+1)^{2}}{128\lambda^{3}(\lambda+1)^{3}}(\sum_{m}\varepsilon_{m}^{2})^{2}]\nonumber\\
&\times\exp[-\frac{(2\lambda+1)^{2}N}{128\lambda^{3}(\lambda+1)^{3}}\sum_{m}\varepsilon_{m}^{4}]\quad.\label{e:labval3}
\end{align}
Comparing (\ref{e:labval3}) to the Lower bound $\mathcal{E}_{\alpha}$, it is immediately clear that $V_{n}(\bm{t};\lambda)$ is much larger. Expand the products in square brackets around $\lambda$,
\begin{align*}
&\hspace{-4mm}\Big[\prod_{j}(1+\frac{1}{\lambda_{j}})^{\frac{\lambda(N-1)+(N-2)\varepsilon_{j}/2}{2}}\Big]\Big[\prod_{k<l}\frac{\sqrt{(1+\lambda_{k})(1+\lambda_{l})}}{\sqrt{(1+\lambda_{k})(1+\lambda_{l})}-\sqrt{\lambda_{k}\lambda_{l}}}\Big]\\
&=(1+\frac{1}{\lambda})^{\frac{x}{2}}(1+\lambda)^{\binom{N}{2}}\exp[\frac{2\lambda(N-1)+(N-2)\varepsilon_{j}}{4}\log\big(\frac{1+\lambda+\varepsilon_{j}}{1+\lambda}\frac{\lambda}{\lambda+\varepsilon_{j}}\big)]\\
&\times\exp[-\sum_{k<l}\log\big(1+\lambda-\sqrt{(1+\lambda-\frac{1+\lambda}{1+\lambda+\varepsilon_{k}})(1+\lambda-\frac{1+\lambda}{1+\lambda+\varepsilon_{l}})}\big)]\\
&=(1+\frac{1}{\lambda})^{\frac{x}{2}}(1+\lambda)^{\binom{N}{2}}\\
&\times\exp\Big[(\sum_{m}\varepsilon_{m}^{2})\cdot\big[N\frac{\lambda^{2}}{4\lambda^{2}(\lambda+1)^{2}}+\frac{\lambda}{4\lambda^{2}(\lambda+1)^{2}}-(N-1)\frac{3\lambda^{2}+\lambda}{8\lambda^{2}(\lambda+1)^{2}}-\frac{1}{8\lambda(\lambda+1)}\big]\\
&+(\sum_{m}\varepsilon_{m}^{3})\cdot\big[-N\frac{6\lambda^{3}+3\lambda^{2}+\lambda}{24\lambda^{3}(\lambda+1)^{3}}+N\frac{14\lambda^{3}+9\lambda^{2}+3\lambda}{48\lambda^{3}(\lambda+1)^{3}}\big]\\
&+(\sum_{m}\varepsilon_{m}^{4})\cdot\big[N\frac{6\lambda^{4}+6\lambda^{3}+4\lambda^{2}+\lambda}{24\lambda^{4}(\lambda+1)^{4}}-N\frac{30\lambda^{4}+28\lambda^{3}+19\lambda^{2}+5\lambda}{128\lambda^{4}(\lambda+1)^{4}}\big]\\
&+(\sum_{m}\varepsilon_{m}^{2})^{2}\cdot\big[\frac{6\lambda^{2}+6\lambda+1}{128\lambda^{3}(\lambda+1)^{3}}\big]\times\big(1+\mathcal{O}(N^{-\frac{1}{2}+5\omega})\big)\quad,
\end{align*}
yields combined with (\ref{e:labval3}) the desired result.\\
To determine the error from the difference $\mathcal{D}$ from Lemma \ref{l:diff}, we divide it by $V_{N}(\bm{t};\lambda)$. Assuming that $|t_{j}-\lambda(N-1)|=\lambda N^{\frac{1}{2}+\omega}$ takes maximal values, it follows that the relative difference is at most
\begin{align*}
&\hspace{-4mm}\mathcal{O}(N^{2-K\alpha})\exp[\frac{4\lambda^{2}+4\lambda+1}{3\lambda(\lambda+1)}]\exp[\frac{4\lambda^{2}+4\lambda+3}{4(\lambda+1)^{2}}N^{2\omega}]\\
&\times\exp[\frac{(2\lambda+1)^{2} \lambda N^{4\omega}}{8(\lambda+1)^{3}}]\exp[\frac{(2\lambda+1)^{2} \lambda N^{4\omega}}{8(\lambda+1)^{3}}]\quad.
\end{align*}
Only the first exponential can become large, if $\lambda$ is small. Assuming that $\lambda > C/\log(N)$, this factor adds an error $N^{\frac{1}{3C}}$.\\
To keep this relative error small, it is furthermore necessary that $\exp[N^{4\omega}]\ll N^{K\alpha-2}$. Solving this yields
\begin{equation*}
0<\omega<\frac{\log(K\alpha-2)+\log\big(\log(N)\big)}{4\log(N)}\quad.
\end{equation*}
\end{proof}

Choosing the value of $\lambda$ may seem arbitrary at first. It is not. Comparing (\ref{e:labval3}) to the Lower bound $\mathcal{E}_{\frac{1}{2}-r}$ for some small $r>0$, the outcome is only much larger, if
\begin{equation*}
\sum_{m}\varepsilon_{m}^{4}\ll N^{2r}\qquad\text{and}\qquad \sum_{m}\varepsilon_{m}^{2}\ll N^{r}\quad.
\end{equation*}
It follows that $\lambda N(N-1)=x$ in the limit. In~\cite{mckay1} the number of matrices $V_{N}(\bm{t};\lambda)$ has been calculated for the case that all $t_{j}$ are equal. They require $\lambda$ to be the average matrix entry for infinitely large matrices. Because Lemma \ref{l:labval} covers this case too, the same value for $\lambda$ had to be expected.\\

Methods to treat such multi-dimensional combinatorical Gaussian integrals in more generality have been discussed in~\cite{mckay2}.

\section{Reduction of the integration region\label{sec:rir}}

In the previous paragraph the result of the integral (\ref{e:V3}) in a small box around the origin was obtained. Knowing this makes it much easier to compare the contribution inside and outside of this box. This is the main aim of Lemma \ref{l:funris}.

\begin{lemma}\label{l:est}
For $a\in[0,1]$ and $n\in\mathbb{N}$ the estimates
\begin{equation*}
\exp[na\log (2)]\leq (1+a)^{n}\leq \exp[na]
\end{equation*}
hold.
\end{lemma}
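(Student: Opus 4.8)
The plan is to take logarithms and reduce both inequalities to a single scalar estimate. Since $\exp$ is increasing and $(1+a)^{n}=\exp[n\log(1+a)]$, and since the case $n=0$ is trivial (all three expressions equal $1$), it suffices to prove, after dividing through by the positive integer $n$, that
\[
a\log 2 \;\le\; \log(1+a)\;\le\; a \qquad\text{for all } a\in[0,1].
\]
Multiplying this chain by $n$ and exponentiating then recovers the claimed bounds, so the whole lemma collapses to these two one-dimensional inequalities.

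For the upper bound I would use the elementary estimate $1+a\le e^{a}$, valid for every real $a$; taking logarithms (which are increasing) gives $\log(1+a)\le a$ immediately. Equivalently, this is just the tangent-line bound for the concave function $a\mapsto\log(1+a)$ at the point $a=0$, and it holds without any restriction on $a$.

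For the lower bound I would invoke concavity of $g(a)=\log(1+a)$ on $[0,1]$. The secant line joining the endpoints $(0,g(0))=(0,0)$ and $(1,g(1))=(1,\log 2)$ is exactly the line $a\mapsto a\log 2$, and a concave function lies on or above each of its secants over the corresponding interval. Hence $\log(1+a)\ge a\log 2$ for $a\in[0,1]$, with equality precisely at the two endpoints.

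The computation is entirely routine, so there is no real obstacle; the only point deserving a moment's care is that the lower bound is genuinely confined to $a\in[0,1]$. The secant estimate $\log(1+a)\ge a\log2$ fails once $a$ leaves $[0,1]$, so the hypothesis $a\in[0,1]$ is used in an essential way for the left inequality, whereas the right inequality $\log(1+a)\le a$ holds unconditionally. This asymmetry is the one feature worth flagging explicitly when the lemma is later applied.
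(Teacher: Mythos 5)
Your proof is correct, and for the lower bound it coincides with the paper's argument: $\log(1+a)$ is concave, the line $a\mapsto a\log 2$ is its secant over $[0,1]$ (equality at $a=0$ and $a=1$), so concavity gives $\log(1+a)\geq a\log 2$ there. Where you diverge is the upper bound: the paper expands $(1+a)^{n}=\sum_{j=0}^{n}\binom{n}{j}a^{j}$ and compares term by term with the exponential series, using $\frac{n!}{n^{j}(n-j)!}\leq 1$ to get $(1+a)^{n}\leq \sum_{j=0}^{n}\frac{(na)^{j}}{j!}\leq \exp[na]$, whereas you reduce everything to the scalar inequality $1+a\leq e^{a}$ (the tangent-line bound at $a=0$) and then raise to the $n$-th power. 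Both are sound. Your route has the advantage of treating the two bounds uniformly, as the tangent above and the secant below of the single concave function $\log(1+a)$, and it makes explicit that the upper bound holds for all real $a$ while the lower bound genuinely requires $a\in[0,1]$ --- a point the paper leaves implicit. What the paper's binomial computation buys is a purely algebraic verification of the upper half that never needs monotonicity of the logarithm or the inequality $1+a\leq e^{a}$ as an input.
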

\begin{proof}
The right-hand side follows from
\begin{equation*}
(1+a)^{n}=\sum_{j=0}^{n}a^{j}\binom{n}{j}=\sum_{j=0}^{n}\frac{(na)^{j}}{j!}\frac{n!}{n^{j}(n-j)!}\leq \sum_{j=0}^{n}\frac{(na)^{j}}{j!}\leq \exp[na]\quad.
\end{equation*}
For the left-hand side it suffices to show that $\log(1+a)\geq a\log(2)$. Because equality holds at one and zero, this follows from the concavity of the logarithm.
\end{proof}

\begin{lemma}\label{l:funris}
For any $\omega\in(0,\frac{1}{4})$ and $\alpha\in(0,\frac{1}{4}-\omega)$, define
\begin{equation*}
\delta_{N}=\frac{N^{-\alpha}\zeta_{N}}{\min \{\lambda_{j}\}}\quad,
\end{equation*}
such that $\zeta_{N}\rightarrow\infty$ and $N^{-\delta}\zeta_{N}\rightarrow0$ for any $\delta>0$. Assuming that $x=\sum_{j}t_{j}=\lambda N(N-1)$, $|t_{j}-\lambda(N-1)|\ll \lambda N^{\frac{1}{2}+\omega}$ and $\lambda>C/log(N)$, the integral
\begin{align*}
&\hspace{-4mm}V_{N}(\bm{t})=\Big(\prod_{j=1}^{N}(1+\frac{1}{\lambda_{j}})^{\frac{t_{j}}{2}}\Big)(2\pi)^{-N}\int_{\mathbb{T}^{N}}\!\!\!\ud\bm{\varphi}\,e^{-i\sum_{j=1}^{N}\varphi_{j}t_{j}}\\
&\times\prod_{1\leq k<l\leq N}\!\!\frac{\sqrt{(1+\lambda_{k})(1+\lambda_{l})}}{\sqrt{(1+\lambda_{k})(1+\lambda_{l})}-\sqrt{\lambda_{k}\lambda_{l}}}\,\frac{1}{1-\frac{\sqrt{\lambda_{k}\lambda_{l}}}{\sqrt{(1+\lambda_{k})(1+\lambda_{l})}-\sqrt{\lambda_{k}\lambda_{l}}} (e^{i(\varphi_{k}+\varphi_{l})}-1)}
\end{align*}
can be restricted to
\begin{align*}
&\hspace{-4mm}V_{N}(\bm{t};\lambda)=\frac{2}{(2\pi)^{N}}\Big(\prod_{j=1}^{N}(1+\frac{1}{\lambda_{j}})^{\frac{t_{j}}{2}}\Big)\int_{[-\delta_{N},\delta_{N}]^{N}}\!\!\!\!\!\!\!\!\!\!\!\ud\bm{\varphi}\,\exp[-i\sum_{j}\varphi_{j}(t_{j}-\lambda(N-1))]\\
&\times\prod_{1\leq k<l\leq N}\!\!\frac{\sqrt{(1+\lambda_{k})(1+\lambda_{l})}}{\sqrt{(1+\lambda_{k})(1+\lambda_{l})}-\sqrt{\lambda_{k}\lambda_{l}}}\,\frac{1}{1-\frac{\sqrt{\lambda_{k}\lambda_{l}}}{\sqrt{(1+\lambda_{k})(1+\lambda_{l})}-\sqrt{\lambda_{k}\lambda_{l}}} (e^{i(\varphi_{k}+\varphi_{l})}-1)}\\
&\times\big(1+\mathcal{O}(N^{\frac{3}{2}}\exp[-N^{1-2\alpha}\zeta_{N}^{2}])\big)\quad.
\end{align*}
\end{lemma}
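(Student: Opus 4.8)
The plan is to prove the lemma by a localisation argument based on the concentration of the integrand of (\ref{e:V3}) exhibited in (\ref{e:intfac}). Since the phase $e^{-i\sum_j\varphi_j t_j}$ and the $\varphi$-independent prefactors $\frac{\sqrt{(1+\lambda_k)(1+\lambda_l)}}{\sqrt{(1+\lambda_k)(1+\lambda_l)}-\sqrt{\lambda_k\lambda_l}}$ have modulus one, the modulus of the integrand is governed by $f(\bm\varphi):=\prod_{k<l}\big|1-\mu_{kl}(e^{i(\varphi_k+\varphi_l)}-1)\big|^{-1}$ with $\mu_{kl}=\frac{\sqrt{\lambda_k\lambda_l}}{\sqrt{(1+\lambda_k)(1+\lambda_l)}-\sqrt{\lambda_k\lambda_l}}$. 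By (\ref{e:intfac}) each factor is $\le 1$ and equals one exactly when $\cos(\varphi_k+\varphi_l)=1$, and a short argument shows that the only points of $\mathbb{T}^N$ at which \emph{all} pair-sums satisfy $\varphi_k+\varphi_l\equiv0\pmod{2\pi}$ are $\bm\varphi=\bm0$ and its antipode $\bm\varphi=(\pi,\dots,\pi)$ (from $\varphi_i+\varphi_j\equiv\varphi_i+\varphi_k\equiv\varphi_j+\varphi_k\equiv0$ one deduces all $\varphi_m$ equal to a common $a$ with $2a\equiv0$). I would therefore localise to the two boxes $B_0=[-\delta_N,\delta_N]^N$ and $B_\pi=B_0+(\pi,\dots,\pi)$ on the torus.

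The factor $2$ comes from the symmetry $\varphi_j\mapsto\varphi_j+\pi$, which maps $B_0$ onto $B_\pi$, leaves every $e^{i(\varphi_k+\varphi_l)}$ and hence the whole product invariant, and multiplies the phase by $e^{-i\pi\sum_j t_j}=e^{-i\pi x}$. Because each $t_j=\sum_{k\neq j}b_{jk}$ with $b$ symmetric and $b_{jj}=0$, the total $x=\sum_j t_j=2\sum_{k<l}b_{kl}$ is even, so $e^{-i\pi x}=1$ and the two box contributions to the \emph{complex} integral coincide. Thus the full integral equals $2\int_{B_0}(\text{integrand})+\mathcal{E}$ with $|\mathcal{E}|\le\int_{\mathbb{T}^N\setminus(B_0\cup B_\pi)}f$, and the recentring of the phase to $t_j-\lambda(N-1)$ in the statement is a cosmetic rewriting: separating the mean $\lambda(N-1)$ of the $t_j$ removes the constant phase $e^{i\lambda(N-1)\sum_j\varphi_j}$, which is precisely the linear-in-$\varphi$ term produced by the product in (\ref{e:comb}) at the symmetric point $\lambda_j\equiv\lambda$. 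The whole task is then to bound $\int_{\mathbb{T}^N\setminus(B_0\cup B_\pi)}f$ relative to the central value $\int_{B_0}|\text{integrand}|$ supplied by Lemma \ref{l:labval}.

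The crucial point is that this comparison is with an exponentially small central value, not with the torus volume. The idea is that $f$ is an $N$-dimensional near-Gaussian: by (\ref{e:fracest}) each factor equals $\exp[-\tfrac12\mu_{kl}(\mu_{kl}+1)(\varphi_k+\varphi_l)^2]$ up to $\mathcal{O}(y^4)$, and the algebraic identity $\sum_{k<l}(\varphi_k+\varphi_l)^2=(N-2)\|\bm\varphi\|^2+(\textstyle\sum_j\varphi_j)^2$ gives $f\approx\exp[-\tfrac12\mu(\mu+1)\bm\varphi^{\!\top}((N-2)I+J)\bm\varphi]$ near $\bm0$, a Gaussian whose precision matrix has all eigenvalues of order $\lambda(\lambda+1)N$. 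Hence the box half-width $\delta_N$ sits $N^{1/2-\alpha}\zeta_N/\sqrt{\lambda}\to\infty$ standard deviations from the centre — this is where $\alpha<\tfrac14$ enters — so $B_0$ captures all but a Gaussian-tail fraction $\exp[-\Theta(N^{1-2\alpha}\zeta_N^2)]$ of the mass, the polynomial factor $N^{3/2}$ absorbing the union bound over the $N$ coordinate directions and the normalisation.

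The main obstacle is making this tail estimate rigorous, for two reasons. First, the Gaussian description only holds near the peaks, so on the remainder of $\mathbb{T}^N$ I would fall back on the exact identity in (\ref{e:intfac}) and the concavity chord $\log(1+cy)\ge y\log(1+c)$ for $y\in[0,1]$, yielding the clean majorant $f\le(1+c)^{-S(\bm\varphi)/2}$ with $S=\sum_{k<l}\sin^2(\tfrac{\varphi_k+\varphi_l}2)$ and $c=\min_{k<l}4\mu_{kl}(\mu_{kl}+1)$, to which Lemma \ref{l:est} applies. Because the arguments are the \emph{sums} $\varphi_k+\varphi_l$, one must show $S$ is bounded below off the two peaks: clustering of the $\varphi_j$ about any value other than $0$ or $\pi$ forces $\Theta(N^2)$ pair-sums to have $\sin^2$ of order one, while a single displaced coordinate already gives $S\gtrsim N\delta_N^2$, so $S=\Theta(N^2)$ throughout the far region and the majorant is there utterly negligible beside the central value. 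Second, $\lambda$ may be as small as $C/\log N$, so $c$ degenerates and one must check that the surviving decay still dominates; this, together with the slow divergence of $\zeta_N$ and the constraint $\alpha<\tfrac14$, is exactly what keeps the relative error at $\mathcal{O}(N^{3/2}\exp[-N^{1-2\alpha}\zeta_N^2])$. The toroidal wrap-around, ensuring that precisely the two peaks and no spurious near-peaks are captured, is the remaining routine bookkeeping.
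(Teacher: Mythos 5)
Your plan coincides structurally with the paper's proof: the factor $2$ from the $\bm{\varphi}\mapsto\bm{\varphi}+\bm{\pi}$ symmetry (legitimate since $x$ is even), localisation to the two peaks $\bm{0}$ and $\bm{\pi}$, suppression of configurations with displaced coordinates through the decay of the cross-pair factors, a union bound over which coordinates are displaced, and comparison with the exponentially small central value rather than the torus volume (the paper routes this comparison through the Lower bound $\mathcal{E}_{\alpha}$ of Definition \ref{dfnt:lb} together with Lemma \ref{l:labval}; your direct comparison is equivalent). Your dominant error $N^{\frac{3}{2}}\exp[-N^{1-2\alpha}\zeta_{N}^{2}]$ from a single displaced coordinate is exactly the paper's Case 1 with $m=1$, and your clustering remark plays the role of its Case 2.

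There is, however, a genuine gap in your far-region estimate. You commit to the majorant obtained by applying the concavity chord factor by factor, $\big(1+4\mu_{kl}(\mu_{kl}+1)\sin^{2}(y/2)\big)^{-1}\leq(1+c)^{-\sin^{2}(y/2)}$ with $c=\min_{k<l}4\mu_{kl}(\mu_{kl}+1)$. This discards the scale of $\mu_{kl}$: for a cross pair with $|y|\sim\delta_{N}$ the exact factor is $\exp[-\Theta(\mu_{kl}(\mu_{kl}+1)\delta_{N}^{2})]=\exp[-\Theta(N^{-2\alpha}\zeta_{N}^{2})]$ \emph{uniformly in} $\lambda$ (because $\delta_{N}\sim N^{-\alpha}\zeta_{N}/\lambda$), whereas your chord retains only $\exp[-\Theta(\delta_{N}^{2}\log(1+\lambda^{2}))]=\exp[-\Theta(N^{-2\alpha}\zeta_{N}^{2}\,\lambda^{-2}\log\lambda)]$. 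The lemma imposes no upper bound on $\lambda$ — and the polytope application sends $\lambda\to\infty$ — so once $\lambda$ grows faster than a small power of $N$, your per-coordinate suppression no longer beats the slack factors (the $\exp[N^{\frac{1}{2}+2\omega}]$ in the central bound (\ref{e:cl1}), the binomial and normalisation factors), and the claimed error is not obtained; note that you only discuss the degeneration at the small-$\lambda$ end, which is precisely where the chord is harmless. The paper performs the two steps in the opposite, correct order: first the quadratic estimate (\ref{e:fracest}) on each cross pair, giving $(1+\frac{1}{4}\lambda(\lambda+1)\delta_{N}^{2})^{-Nm/2}$, and only then Lemma \ref{l:est} on this aggregate, whose base is $1+o(1)$, so the chord costs only the constant $\log 2$.

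Two further slips. First, your conclusion that $S=\Theta(N^{2})$ throughout the far region contradicts your own preceding clause: one displaced coordinate gives only $S\gtrsim N\delta_{N}^{2}$ (after minimising over the in-box coordinates, which is what the paper's maximum-location argument supplies), and it is exactly this weakest case that fixes the error term. Second, the phase recentring to $t_{j}-\lambda(N-1)$ is not cosmetic: the two box integrands differ by $e^{i\lambda(N-1)\sum_{j}\varphi_{j}}$, which oscillates rapidly on the box; this defect, though, sits in the statement itself (it is inconsistent with the hypothesis of Lemma \ref{l:labval}, with which this lemma is chained), and the paper's own proof, working only with absolute values, silently passes over it as well.
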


\begin{proof}
The idea of the proof is to consider the integrand in a small box $[-\delta_{N},\delta_{N}]^{N}$ and see what happens to it if some of the angles $\bm{\varphi}$ lie outside of it.\\
Because $x$ is even, it follows that the integrand takes the same value at $\bm{\varphi}$ and $\bm{\varphi}+\bm{\pi}=(\varphi_{1}+\pi,\ldots,\varphi_{N}+\pi)$. This means that only half of the space has to be considered and the result must be multiplied by $2$.\\

This estimate follows directly from application of (\ref{e:fracexpa}-\ref{e:comb}) to the integrand and a computation like the one in the proof of Lemma \ref{l:labval}. Writing
\begin{equation*}
\mu_{kl}=\frac{\sqrt{\lambda_{k}\lambda_{l}}}{\sqrt{(1+\lambda_{k})(1+\lambda_{l})}-\sqrt{\lambda_{k}\lambda_{l}}}\qquad\text{and}\qquad \varepsilon_{j}=\lambda_{j}-\lambda
\end{equation*}
with $|\varepsilon_{j}|\ll \lambda N^{-\frac{1}{2}+\omega}$ this yields
\begin{align}
&\hspace{-4mm}\Big|\int_{[-\delta_{N},\delta_{N}]^{N}}\!\!\!\!\!\!\!\!\ud\bm{\varphi}\,\prod_{1\leq k<l\leq N}\frac{1}{1-\mu_{kl}(\exp[i(\varphi_{k}+\varphi_{l})]-1)}\Big|\nonumber\\
&\leq \int_{[-\delta_{N}/2,\delta_{N}/2]^{N}}\!\!\!\!\!\!\!\!\ud\bm{\varphi}\,\big|\exp[\sum_{m=1}i^{m}\sum_{k<l}A_{m}(\mu_{kl})(\varphi_{k}+\varphi_{l})^{m}]\big|\nonumber\\
&\leq\sqrt{2}\big(\frac{2\pi}{\lambda(\lambda+1)N}\big)^{\frac{N}{2}}\exp[\frac{10\lambda^{2}+10\lambda+1}{4\lambda(\lambda+1)}]\exp[N^{\frac{1}{2}+2\omega}]\quad.\label{e:cl1}
\end{align}
The final exponent $\exp[N^{\frac{1}{2}+2\omega}]$ here comes from the estimate $\mu_{kl}\geq \lambda(1-N^{-\frac{1}{2}+\omega})$.\\

Now we argue case by case why other configurations of the angles $\varphi_{j}$ are asymptotically suppressed.\\
\emph{Case 1.} All but finitely many angles lie in the box $[-\delta_{N},\delta_{N}]^{N}$. A finite number of $m$ angles lies outside of it. We label these angles $\{\varphi_{1},\ldots,\varphi_{m}\}$. The maximum of the integrand
\begin{align*}
&\hspace{-4mm}f:(\varphi_{m+1},\ldots,\varphi_{N})\mapsto \prod_{1\leq k<l\leq N}\frac{1}{1-\mu_{kl}(\exp[i(\varphi_{k}+\varphi_{l})]-1)}
\end{align*}
in absolute value is given by the equations
\begin{equation*}
0=\partial_{\varphi_{j}}|f|=\sum_{k\neq j}\frac{\sin(\varphi_{j}+\varphi_{k})}{1-2\mu_{kl}(\mu_{kl}+1)(\cos(\varphi_{j}+\varphi_{k})-1)}\qquad\text{ for }j=m+1,\ldots,N\quad.
\end{equation*}
It is clear that the maximum is found for $\tilde{\varphi}=\varphi_{m+1}=\ldots=\varphi_{N}$. The first order solution to this is then
\begin{equation*}
\tilde{\varphi}=\frac{-1}{2(N-m-1)}\sum_{k=1}^{m}\frac{\sin(\varphi_{k})}{1+2\mu_{kj}(\mu_{kj}+1)(1-\cos\varphi_{k})}\quad.
\end{equation*}
This shows that the maximum will lie in the box $[-\delta_{N}/2,\delta_{N}/2]^{N}$. This implies that $|\varphi_{j}-\varphi_{k}|>\delta_{N}/2$, when $1\leq j\leq m$ and $m+1\leq k\leq N$. Applying the estimate (\ref{e:fracest}) to pairs of such angles and afterwards (\ref{e:cl1}) to the remaining $N-m$ angles in the box $[-\delta_{N},\delta_{N}]^{N-m}$ gives us an upper bound of
\begin{align*}
&\hspace{-4mm}\frac{2\sqrt{2}}{\big(2\pi\lambda(\lambda+1)(N-m)\big)^{\frac{N-m}{2}}}\Big(\prod_{j}(1+\frac{1}{\lambda_{j}})^{\frac{t_{j}}{2}}\Big)\cdot\Big(\prod_{k<l}\frac{\sqrt{(1+\lambda_{k})(1+\lambda_{l})}}{\sqrt{(1+\lambda_{k})(1+\lambda_{l})}-\sqrt{\lambda_{k}\lambda_{l}}}\Big)\\
&\times\binom{N}{m}\exp[\frac{30\lambda^{2}+30\lambda+3}{12\lambda(\lambda+1)}]\exp[N^{\frac{1}{2}+2\omega}](1+\frac{\lambda(\lambda+1)\delta_{N}^{2}}{4})^{-\frac{Nm}{2}}
\end{align*}
on the part of the integral in the small box $[-\delta_{N},\delta_{N}]^{N}$. There are $\binom{N}{m}$ ways to select the $m$ angles. Applying Lemma \ref{l:est} to the final factor and comparing the result with the Lower bound, shows that this may be neglected if
\begin{equation*}
2\sqrt{2}e^{\frac{16\lambda^{2}+16\lambda+4}{12\lambda(\lambda+1)}}e^{m/2}N^{\frac{3m}{2}}(2\pi\lambda(\lambda+1))^{\frac{m}{2}}\exp[N^{1-2\alpha}+N^{\frac{1}{2}+2\omega}]e^{-\frac{Nm\log(2)}{8}\lambda(\lambda+1)\delta_{N}^{2}}\rightarrow0\quad.
\end{equation*}
The condition $0<\alpha<\frac{1}{4}-\omega$ and the sequence $\zeta_{N}\rightarrow\infty$ guarantee this. In fact, the same argument works for all $m$ such that $m/N\rightarrow0$.\\

\emph{Case 2.} If the number $m=\rho N$ of angles outside the integration box $[-\delta_{N},\delta_{N}]^{N}$ increases faster, another estimate is needed, because the maximum $\tilde{\varphi}$ may lie outside of $[-\delta_{N}/2,\delta_{N}/2]$. It is clear that $0<\rho<1$ in the limit.\\
Estimate the location $\varphi_{j}=\tilde{\varphi}$ of the maximum is much trickier now. Regardless of its precise location, we will take the maximum value as the estimate for the integrand in the entire integration box. The smaller box $[-\delta_{N}/2,\delta_{N}/2]^{N}$ is considered once more. We distinguish two options.\\
\emph{-Case 2a.} The maximum lies in $[-\delta_{N}/2,\delta_{N}/2]^{N}$, thus $\tilde{\varphi}\in[-\delta_{N}/2,\delta_{N}/2]$.\\
Applying the estimate (\ref{e:fracest}) to this yields an upper bound
\begin{align*}
&\hspace{-4mm}\binom{N}{\rho N}(2\delta_{N})^{N(1-\rho)}(2\pi)^{\rho N}\Big(\prod_{j}(1+\frac{1}{\lambda_{j}})^{\frac{t_{j}}{2}}\Big)\\
&\times\Big(\prod_{k<l}\frac{\sqrt{(1+\lambda_{k})(1+\lambda_{l})}}{\sqrt{(1+\lambda_{k})(1+\lambda_{l})}-\sqrt{\lambda_{k}\lambda_{l}}}\Big)(1+\frac{1}{4}\lambda(\lambda+1)\delta_{N}^{2})^{-\frac{N^{2}\rho(1-\rho)}{4}}\quad.
\end{align*}
Applying Lemma \ref{l:est} to the last factor and dividing this by $\mathcal{E}_{\alpha}$ shows that
\begin{equation*}
\big(2^{\frac{1}{\rho}}\pi \delta^{\frac{1-\rho}{\rho}}(2\pi\lambda(\lambda\!+\!1)N)^{\frac{2}{\rho}}N\exp[\frac{N^{-2\alpha}}{\rho}+\frac{N^{-\frac{1}{2}+2\omega}(1\!-\!\rho)}{\rho}-\frac{\lambda(\lambda\!+\!1)\delta_{N}^{2}N(1\!-\!\rho)\log(2)}{16}]\big)^{\rho N}\rightarrow0
\end{equation*}
is a sufficient and satisfied condition.\\
\emph{-Case 2b.} The maximum lies not in $[-\delta_{N}/2,\delta_{N}/2]^{N}$. This is the same as $\delta_{N}/2<|\tilde{\varphi}|\leq\delta_{N}$.\\
Applying (\ref{e:fracest}) only to the angles $\varphi_{\rho N+1},\ldots,\varphi_{\rho N}$ in the integration box gives an upper bound
\begin{align*}
&\hspace{-4mm}\binom{N}{\rho N}(2\delta_{N})^{N(1-\rho)}(2\pi)^{\rho N}\Big(\prod_{j}(1+\frac{1}{\lambda_{j}})^{\frac{t_{j}}{2}}\Big)\\
&\times\Big(\prod_{k<l}\frac{\sqrt{(1+\lambda_{k})(1+\lambda_{l})}}{\sqrt{(1+\lambda_{k})(1+\lambda_{l})}-\sqrt{\lambda_{k}\lambda_{l}}}\Big)(1+\frac{1}{4}\lambda(\lambda+1)\delta_{N}^{2})^{-\frac{N^{2}(1-\rho)^{2}}{4}}\quad.
\end{align*}
The same steps as in Case 2a. will do.\\
This shows that the integration can be restricted to the box $[-\delta_{N},\delta_{N}]^{N}$. The error terms follow from \emph{Case 1.}, since convergence there is much slower.
\end{proof}

Lemma \ref{l:funris} shows that for every $\alpha\in(0,1/4-\omega)$ and $N\in\mathbb{N}$ there is a box that contains most of the integral's mass. As $N$ increases, this box shrinks and the approximation becomes better. The parameter $\alpha$ determines how fast this box shrinks. Smaller values of $\alpha$ lower the Lower bound and, hence, increase the number of configurations within reach at the price of more intricate integrals and less accuracy.\\

The observation that $\zeta_{N}=\log(N)$ and $K\geq \alpha^{-1}(\frac{3}{2}+\frac{1}{3C}-6\omega)$ satisfies all the demands proves Theorem \ref{thrm:p1}.\\
An idea of the accuracy of these formulas can be obtained from Table \ref{t:t1} and \ref{t:t2}, where the reference values
\begin{equation}
y_{k}=\sum_{j=1}^{N}(t_{j}-\lambda(N-1))^{k}\qquad\text{for }k\geq 2\label{e:refval}
\end{equation}
are defined to compare configurations with the reference values $2^{-k}\lambda^{k}N^{1+\frac{k}{2}}$ for $k\geq 2$.

\begin{table}[!hb]\begin{footnotesize}
\hspace{5mm}\begin{tabular}{ccccccc||c||ccc||c||c}
$t_{1}$ & $t_{2}$ & $t_{3}$ & $t_{4}$ & $t_{5}$ & $t_{6}$ & $t_{7}$ & $\#$ & $y_{2}$ & $y_{3}$ & $y_{4}$ & $V_{N}(\bm{t;\lambda})$ & ratio \\\hline
8&8&8&8&8&8&8&5.42E7&0&0&0&5.03E7&0.928\\
7&8&8&8&8&8&9&5.07E7&2&0&2&4.74E7&0.935\\
7&7&8&8&8&9&9&4.75E7&4&0&4&4.47E7&0.941\\
7&7&7&8&9&9&9&4.45E7&6&0&6&4.21E7&0.947\\
6&8&8&8&8&8&10&4.15E7&8&0&32&3.96E7&0.955\\
6&7&8&8&9&9&9&4.13E7&8&-6&20&3.94E7&0.953\\
7&7&7&8&8&9&10&4.18E7&8&6&20&4.00E7&0.956\\
5&8&8&8&9&9&9&3.53E7&12&-24&84&3.40E7&0.964\\
7&7&7&8&8&8&11&3.71E7&12&24&84&3.62E7&0.976\\
5&7&8&8&9&9&10&3.12E7&16&-18&100&3.05E7&0.977\\
6&7&7&7&9&10&10&3.23E7&16&6&52&3.16E7&0.977\\
7&7&7&7&8&8&12&2.91E7&20&60&260&2.96E7&1.017\\
5&5&5&9&10&11&11&1.08E7&50&-18&422&1.11E7&1.031\\
5&7&7&7&7&9&14&1.17E7&50&186&1382&1.34E7&1.143\\
4&6&7&7&8&10&14&7.92E6&62&150&1586&8.94E6&1.128
\end{tabular}\end{footnotesize}
\caption{\small The number ($\#$) of symmetric $7\times 7$-matrices with zero diagonal and natural entries summing to $x=56$ such that the $j$-th row sums to $t_{j}$ and the asymptotic estimates for this number by $V_{N}(\bm{t};\lambda)$ from Lemma \ref{l:labval} with $\lambda=x/(N(N-1))$ the average matrix entry. The parameters $y_{2}$, $y_{3}$ and $y_{4}$ are defined in (\ref{e:refval}) and their reference values are are $22$, $38$ and $68$ respectively. The convergence condition is $|t_{j}-\lambda(N-1)|\leq1.3$. The notation $1.0E6=1.0\times 10^{6}$ is used here.\label{t:t1}}
\end{table}
 
\begin{table}[!hb]\begin{footnotesize}
\hspace{25mm}\begin{tabular}{ccc|c|c|c}
$N$ & $t$ & $\lambda$ & $\#$ & $V_{N}(t;\lambda)$ & ratio\\\hline
6&6&1.20&3.69E4&3.34E4&0.906\\
7&8&1.33&5.42E7&5.03E7&0.928\\
8&9&1.29&1.10E11&1.04E11&0.938\\
9&10&1.25&8.46E14&8.00E14&0.946\\
10&11&1.22&2.45E19&2.34E19&0.952\\
11&12&1.20&2.71E24&2.60E24&0.957\\
12&13&1.18&1.14E30&1.10E30&0.961\\
13&14&1.17&1.86E36&1.79E36&0.965\\
14&15&1.15&1.16E43&1.12E43&0.968\\
15&14&1.00&6.36E46&6.18E46&0.971\\
16&12&0.80&6.32E47&6.15E47&0.974\\
17&12&0.75&9.55E52&9.32E52&0.976\\
18&12&0.71&2.02E58&1.97E58&0.978
\end{tabular}\end{footnotesize}
\caption{\small The number ($\#$) of symmetric $N\times N$-matrices~\cite{mckay4} with zero diagonal and natural entries, such that each row sums to $t$ and the asymptotic estimates for these numbers by $V_{N}(t;\lambda)$ Lemma \ref{l:labval}, where $\lambda=x/(N(N-1))$ the average matrix entry. The notation $1.0E6=1.0\times 10^{6}$ is used here.\label{t:t2}}
\end{table}

\begin{figure}[!hbt]
\begin{subfigure}[t]{0.5\linewidth}
\includegraphics[width=0.9\textwidth]{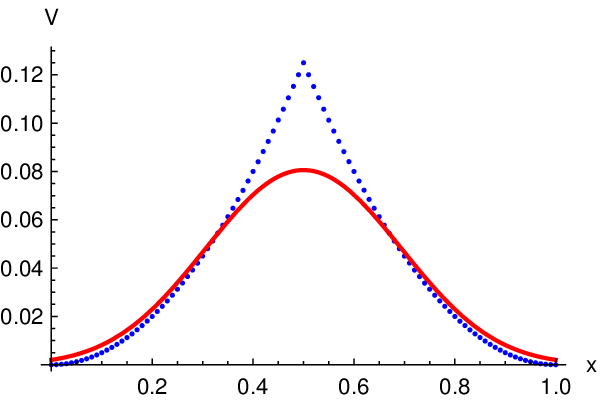}
\caption{N=4\label{f:f1a}}
\end{subfigure}%
\begin{subfigure}[t]{0.5\linewidth}
\includegraphics[width=0.9\textwidth]{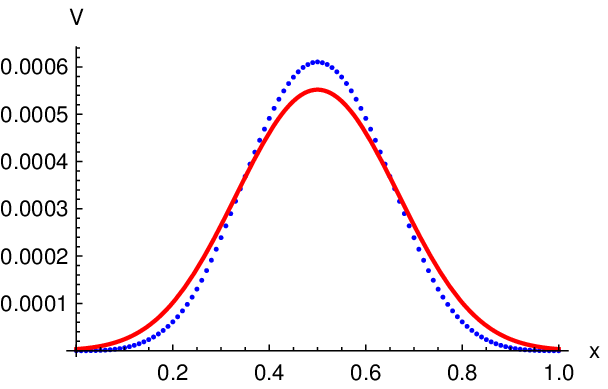}
\subcaption{N=5\label{f:f1b}}
\end{subfigure}\\
\begin{subfigure}[t]{0.5\linewidth}
\includegraphics[width=0.9\textwidth]{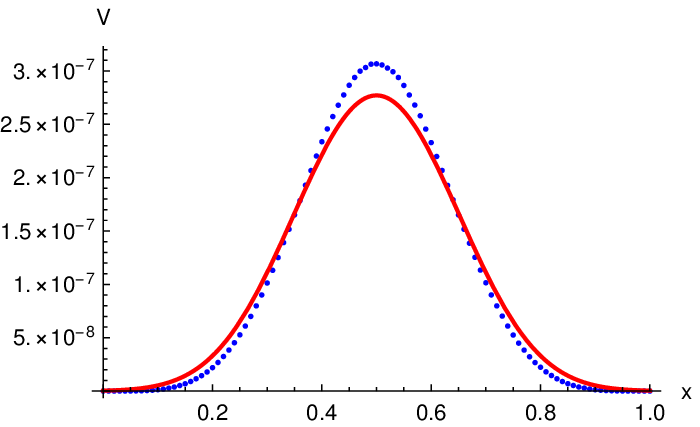}
\caption{N=6\label{f:f1c}}
\end{subfigure}%
\begin{subfigure}[t]{0.5\linewidth}
\includegraphics[width=0.9\textwidth]{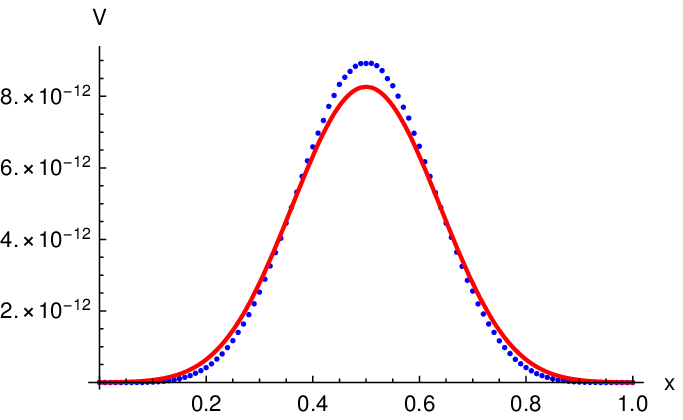}
\subcaption{N=7\label{f:f1d}}
\end{subfigure}\\
\begin{subfigure}[t]{0.5\linewidth}
\includegraphics[width=0.9\textwidth]{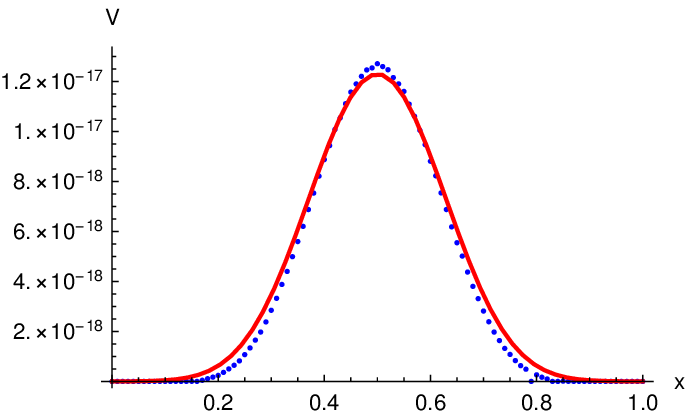}
\caption{N=8\label{f:f1e}}
\end{subfigure}%
\begin{subfigure}[t]{0.5\linewidth}
\includegraphics[width=0.9\textwidth]{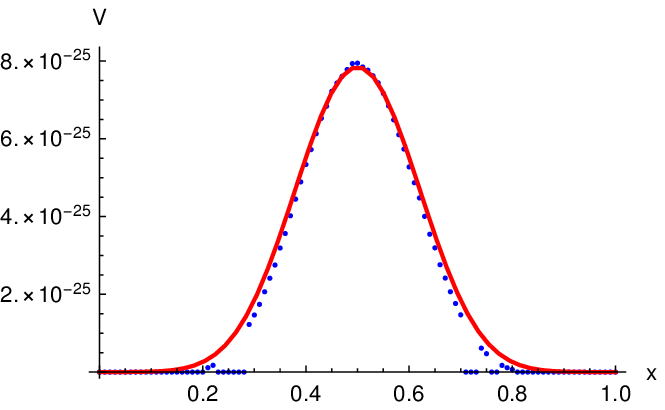}
\subcaption{N=9\label{f:f1f}}
\end{subfigure}
\caption{\small The volume result (\ref{e:pv1}) (red) and the volume of $P_{N}(0.5,\ldots,0.5,x,1-x)$ (blue) for $N=4,5,6,7,8,9$. The latter were determined by a numerical integration algorithm for convex multidimensional step functions on the basis of Monte Carlo integration.\label{f:f1}}
\end{figure}

\begin{figure}[!hbt]
\begin{subfigure}[t]{0.5\linewidth}
\includegraphics[width=0.9\textwidth]{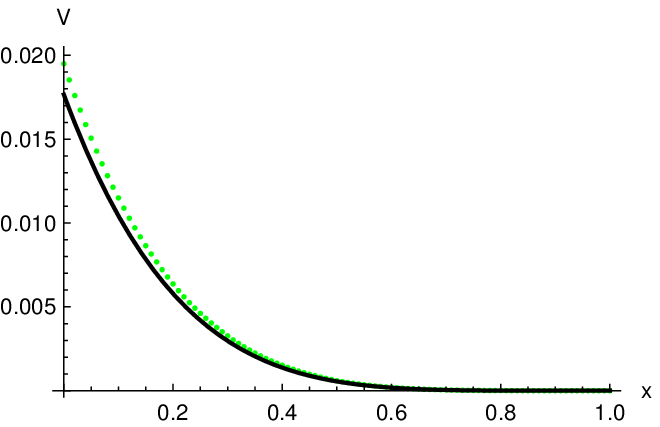}
\caption{$\vol\big(P_{5}(x,x,x,x,x)\big)$\label{f:f2a}}
\end{subfigure}%
\begin{subfigure}[t]{0.5\linewidth}
\includegraphics[width=0.9\textwidth]{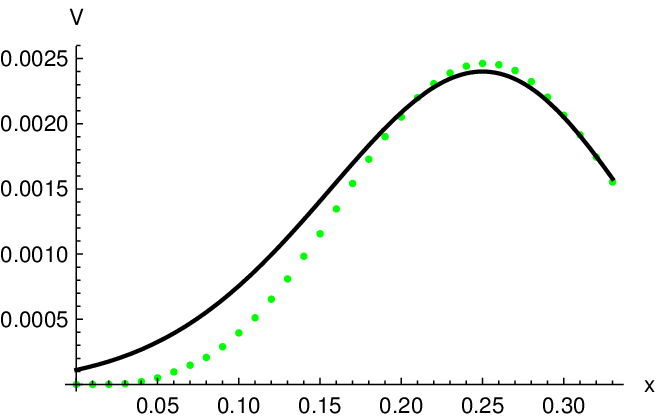}
\subcaption{$\vol\big(P_{5}(0.5,x,x,x,1-3x)\big)$\label{f:f2b}}
\end{subfigure}
\caption{\small The volume result (\ref{e:pv1}) (black) and the volume for two functions (green) for $N=5$. The latter were determined by straightforward Monte Carlo integration.\label{f:f2}}
\end{figure}

\section{Polytopes\label{sec:pv}}
In the previous paragraphs the asymptotic counting of symmetric matrices with zero diagonal and entries in the natural numbers was discussed. This allows us to return to the polytopes. The first step is to count the total number of symmetric matrices with zero diagonal and integer entries summing up to $x$ to see which fraction of such matrices are covered by Theorem \ref{thrm:p1}. This is easily done by a line of $\binom{N}{2}+\frac{x}{2}$ elements, for example unit elements $1$, and $\binom{N}{2}-1$ semicolons. Putting the semicolons between the elements, such that the line begins and ends with a unit element and no semicolons stand next to each other, creates such a matrix. The number of elements before the first semicolon minus one is the first matrix element $b_{12}$. The number of elements minus one between the first and second semicolon yields the second matrix element $b_{13}$. In this way, we obtain the $\binom{N}{2}$ elements of the upper triangular matrix. There are $\binom{N}{2}-1+\frac{x}{2}$ positions to put $\binom{N}{2}-1$ semicolons and thus
\begin{equation}
\binom{\binom{N}{2}-1+\frac{x}{2}}{\binom{N}{2}-1}\approx \frac{1}{N}\sqrt{\frac{1}{\pi\lambda(\lambda+1)}}(1+\lambda)^{\binom{N}{2}}(1+\frac{1}{\lambda})^{\frac{x}{2}}\big(1+\mathcal{O}(N^{-1})\big)\label{e:nosm}
\end{equation}
such matrices, where we have used Stirling's approximation and the average matrix entry condition $\lambda=x/(N(N-1))$ for the approximation.\\

The next step is to estimate the number of matrices within reach of Theorem \ref{thrm:p1}. Using only the leading order, the number of covered matrices is given by
\begin{align}
&\hspace{-4mm}\int \ud \bm{t}\;V_{N}(\bm{t};\lambda)\,\delta(\lambda N(N-1)-\sum_{j}t_{j})\nonumber\\
&=\frac{(1+\lambda)^{\binom{N}{2}}(1+\frac{1}{\lambda})^{\frac{x}{2}}}{\pi^{\frac{N}{2}}\sqrt{\lambda (\lambda+1)N}}\exp[\frac{14\lambda^{2}+14\lambda-1}{12\lambda(\lambda+1)}]\!\int\!\!\ud S\!\! \int\!\!\ud\sigma\!\!\int\!\!\ud\tau\,\exp[2\pi i\sigma S+S^{2}]\nonumber\\
&\times\Big\{\prod_{j=1}^{N}\int_{-N^{\omega}}^{N^{\omega}}\!\!\!\!\ud y_{j}\,\exp[2\pi i\tau y_{j}]\exp[-y_{j}^{2}(1\!+\!\frac{2}{N}\!-\!\frac{2\pi i\sigma}{N})]\exp[\frac{\sqrt{2}(2\lambda+1)y_{j}^{3}}{3\sqrt{\lambda(\lambda+1)N}}]\nonumber\\
&\times\exp[-\frac{3\lambda^{2}+3\lambda+1}{3\lambda(\lambda+1)N}y_{j}^{4}]\Big\}\nonumber\\
&=\frac{(1+\lambda)^{\binom{N}{2}}(1+\frac{1}{\lambda})^{\frac{x}{2}}}{N\sqrt{\pi\lambda (\lambda+1)}}\exp[\frac{14\lambda^{2}+14\lambda-1}{12\lambda(\lambda+1)}]\!\int\!\!\ud S\!\! \int\!\!\ud\sigma\!\!\int\!\!\ud\tau\,\exp[2\pi i\sigma S+S^{2}]\nonumber\\
&\times\exp[-\pi^{2}\tau^{2}(1\!-\!\frac{2}{N}\!+\!\frac{2i\pi \sigma }{N})]\exp[-1+\pi i \sigma+\frac{\pi i\tau(2\lambda+1)}{\sqrt{2\lambda(\lambda+1)}}]\exp[\frac{5(2\lambda+1)^{2}}{24\lambda (\lambda+1)}]\nonumber\\
&\times\exp[-\frac{3\lambda^{2}+3\lambda+1}{4\lambda(\lambda+1)}]\times\big(1-\mathcal{O}(\frac{\exp[-N^{2\omega}]}{N^{2\omega}})\big)^{N}\nonumber\\
&=\frac{(1+\lambda)^{\binom{N}{2}}(1+\frac{1}{\lambda})^{\frac{x}{2}}}{N\sqrt{\pi\lambda (\lambda+1)}}\exp[-\frac{1}{4\lambda(\lambda+1)}]\times\big(1-\mathcal{O}(\frac{\exp[-N^{2\omega}]}{N^{2\omega}})\big)^{N}\times\big(1+\mathcal{O}(N^{-1})\big)\quad.\label{e:matcov}
\end{align}

A fraction $\exp[-\frac{1}{4\lambda(\lambda)}]$ of the matrices is covered, provided that $\omega$ is large enough. A sufficient condition is that 
\begin{equation}
\omega \geq \frac{\log \log N}{2\log N}\quad.
\end{equation}
Combining this with the condition 
\begin{equation*}
\omega \leq \frac{\log (K\alpha-2) + \log(\log N)}{4\log (N)}
\end{equation*}
shows that $K\geq \log(N)/\alpha+2$  is necessary to satisfy both demands. However, such large values of $K$ remain without consequences, because higher values of $K$ only influence the the error term in Lemma \ref{l:labval}.\\
As $\lambda\rightarrow\infty$, the fraction of covered matrices tends to one and the volume of the diagonal subpolytopes of symmetric stochastic matrices can be determined by (\ref{e:ehrhart}). In terms of the variables
\begin{equation*}
t_{j}=\frac{1-h_{j}}{a}\quad\text{and}\quad \chi=\sum_{j}h_{j}
\end{equation*}
the volume of the diagonal subpolytope is calculated by
\begin{align}
&\hspace{-4mm}\vol(P_{N}(\bm{h}))=\lim_{a\rightarrow0}a^{\frac{N(N-3)}{2}}V_{N}(\frac{\bm{1}-\bm{h}}{a};\frac{N-\chi}{aN(N-1)})\nonumber\\
&=\sqrt{2}e^{\frac{7}{6}}\Big(\frac{e(N-\chi)}{N(N-1)}\Big)^{\binom{N}{2}}\Big(\frac{N(N-1)^{2}}{2\pi(N-\chi)^{2}}\Big)^{\frac{N}{2}}\exp[-\frac{N(N-1)^{2}}{2(N-\chi)^{2}}\sum_{j}(h_{j}-\frac{\chi}{N})^{2}]\nonumber\\
&\times\exp[-\frac{(N-1)^{2}}{(N-\chi)^{2}}\sum_{j}(h_{j}-\frac{\chi}{N})^{2}]\exp[-\frac{N(N-1)^{3}}{3(N-\chi)^{3}}\sum_{j}(h_{j}-\frac{\chi}{N})^{3}]\nonumber\\
&\times\exp[-\frac{N(N-1)^{4}}{4(N-\chi)^{4}}\sum_{j}(h_{j}-\frac{\chi}{N})^{4}]\exp[\frac{(N-1)^{4}}{4(N-\chi)^{4}}\big(\sum_{j}(h_{j}-\frac{\chi}{N})^{2}\big)^{2}]\quad.\label{e:pv1}
\end{align}

The convergence criterion becomes
\begin{equation*}
\frac{|t_{j}-\lambda (N-1)|}{\lambda N^{\frac{1}{2}+\omega}}=\frac{N^{\frac{1}{2}-\omega}(N-1)}{N-\chi}|h_{j}-\frac{\chi}{N}|\rightarrow0\quad.
\end{equation*}
This is the same as
\begin{equation*}
\sum_{j}|h_{j}-\frac{\chi}{N}|^{k}\ll (\frac{N-\chi}{N-1})^{k}N^{1-\frac{k}{2}+k\omega}\quad\text{for all }k\geq2\quad.
\end{equation*}

This means that we only have accuracy in a small neighbourhood around $\bm{\chi/N}$. However, the calculation (\ref{e:matcov}) shows that this corresponds to almost all matrices asymptotically, so that outside of this region the polytopes will have very small volumes. There, not all relevant factors are known, but missing factors will be small compared to the dominant factor. This means that for diagonals that satisfy
\begin{equation*}
\lim_{N\rightarrow\infty}\frac{(N-1)^{2}\sum_{j}(h_{j}-\frac{\chi}{N})^{2}}{(N-\chi)^{2}\log(N)}=0
\end{equation*}
qualitatively reasonable results are expected.\\

Since we are calculating a $\binom{N}{2}$-dimensional volume with only one length scale, it follows that no correction can become large in this limit. It inherits the relative error from Theorem \ref{thrm:p1}. This proves Theorem \ref{thrm:p2}. Examples of this formula at work are given in Figure \ref{f:f1} and \ref{f:f2}. 

\subsection*{Acknowledgments}
This work was supported by the Deutsche Forschungsgemeinschaft (SFB 878 - groups, geometry \& actions). We thank N. Broomhead and L. Hille for valuable discussions. In particular, we thank B.D. McKay and M. Isaev for clarifications of various aspects in asymptotic enumeration, pounting out a gap in the previous version and their suggestions to fill it.

\bibliography{Art2.bib}{}
\bibliographystyle{unsrt}

\end{document}